\algnewcommand\algorithmicinput{\hspace*{1.0em}\textbf{input:}}
\algnewcommand\Input{\item[\algorithmicinput]}
\algnewcommand\algorithmicoutput{\hspace*{1.0em}\textbf{output:}}
\algnewcommand\Output{\item[\algorithmicoutput]}
\newcommand{\algorithmicand}{\textbf{ and }}
\renewcommand{\And}{\algorithmicand}
\providecommand{\given}{}
\DeclarePairedDelimiter{\parentheses}{(}{)}
\DeclarePairedDelimiter{\brackets}{[}{]}
\DeclarePairedDelimiter{\braces}{\lbrace}{\rbrace}
\DeclarePairedDelimiter{\verts}{\lvert}{\rvert}
\DeclarePairedDelimiter{\Verts}{\lVert}{\rVert}
\providecommand{\SetSymbol}[1][]{%
  \nonscript\:#1\vert
  \allowbreak
  \nonscript\:
  \mathopen{}%
}
\DeclarePairedDelimiterX{\Set}[1]{\lbrace}{\rbrace}{
  \renewcommand{\given}{\SetSymbol[\delimsize]}
  #1
}
\DeclareRobustCommand{\stirling}{\genfrac\{\}{0pt}{}}
\providecommand{\st}{\ensuremath\;\mathrm{s.t.}\;}
\providecommand{\abs}[1]{\ensuremath{\verts*{#1}}}
\providecommand{\norm}[1]{\ensuremath{\Verts*{#1}}}
\DeclareMathOperator*{\minimize}{minimize}
\theoremstyle{definition}                     % bold heading, roman body
\newtheorem{definition}{Definition}
\theoremstyle{remark}                         % italic heading, roman body
\theoremstyle{plain}                          % bold heading, italic body
\newtheorem{theorem}{Theorem}[section]
\newtheorem{corollary}[theorem]{Corollary}
\begin{document}

\title{Dynamic cut aggregation in L-shaped algorithms}

\author{Martin Biel \\
  Division of Decision and Control Systems\\
  School of EECS, KTH Royal Institute of Technology\\
  SE-100 44 Stockholm, Sweden\\
  \texttt{mbiel@kth.se} \\
  \And Mikael Johansson \\
  Division of Decision and Control Systems\\
  School of EECS, KTH Royal Institute of Technology\\
  SE-100 44 Stockholm, Sweden\\
  \texttt{mikael@kth.se}}

\date{\today}

\maketitle

\begin{abstract}
  We present a novel framework for dynamic cut aggregation in L-shaped algorithms. The aim is to improve the parallel performance of distributed L-shaped algorithms through reduced communication latency and load imbalance. We show how optimality cuts can be aggregated into arbitrary partitions without affecting convergence of the L-shaped algorithm. Furthermore, we give a worst-case bound for L-shaped algorithms with static cut aggregation and then extend this result for dynamic aggregation. We propose a variety of aggregation schemes that fit into our framework, and evaluate them on a collection of large-scale stochastic programming problems. All methods are implemented in our open-source framework for stochastic programming, \jlinl{StochasticPrograms.jl}, written in the Julia programming language. In addition, we propose a granulated strategy that combines the strengths of dynamic and static cut aggregation. Major performance improvements are possible with our approach in distributed settings. Our experimental results suggest that the granulated strategy can consistently yield high performance on a range of test problems. The experimental results are supported by our worst-case bounds.
\end{abstract}

\section{Introduction}
\label{sec:introduction}

Stochastic programming is a modeling framework for optimizing decisions under uncertainty~\cite{Birge2011}. Applications of this mathematical field include power systems~\cite{Fleten2007,Groewe-Kuska2005,petra_real-time_2014}, finance~\cite{Krokhmal2005,Zenios2005}, and transportation~\cite{Powell1987,Powell2005}. A common setting is given by linear two-stage stochastic programs. Modern large-scale industrial applications, such as the unit commitment problem presented in~\cite{petra_real-time_2014}, generally require decomposition approaches and parallelization strategies. The well-known  L-shaped algorithm~\cite{van_slyke_l-shaped_1969} is a cutting plane algorithm that can be applied to efficiently decompose and solve two-stage stochastic programs in parallel. The algorithm is essentially equivalent to Benders decomposition~\cite{benders} and can effectively exploit the structure of two-stage programs. The L-shaped algorithm decomposes a stochastic program into an approximating master problem and a set of subproblems. Each iteration the solutions of the subproblems are used to generate cutting planes that are included in the master problem to improve the approximation.

The L-shaped algorithm was originally proposed as a single-cut algorithm~\cite{van_slyke_l-shaped_1969}. In other words, all cutting planes generated during an iteration of the algorithm are aggregated into a single supporting hyperplane. L-shaped was later extended to a multi-cut variant~\cite{Birge1988}, with better convergence properties on many test examples. In contrast to single-cut, generated cutting planes are not aggregated in the multi-cut approach. In this way, more information is kept which typically improves convergence at the cost of a larger master problem. Recent contributions have explored aggregation strategies that fall between a single-cut and multi-cut approach~\cite{Trukhanov2010,Zhang2015,Yang2016}. The aim is to preserve the convergence properties of a multi-cut algorithm, while reducing the size growth of the master problem and communication overhead in distributed implementations. In this work, we formalize this approach and also present a novel dynamic aggregation procedure based on an alternative L-shaped formulation. We show how this formulation allows us to prototype various heuristic aggregation schemes in our open-source software framework for stochastic programming\footnote{Freely available on Github: \url{https://github.com/martinbiel/StochasticPrograms.jl}}~\cite{spjl}. We provide worst-case complexity results and show that large performance gains are possible in practice by solving applied problems.

The rest of this paper is organized as follows. In the next section, we give a brief introduction to the L-shaped algorithm with a focus on cut aggregation. We then present a mathematical formalization of static cut aggregation methods and provide a complexity result that compliments earlier results for single- and multi-cut methods. Next, in Section~\ref{sec:review-l-shaped}, we give a short review of prior cut aggregation approaches in terms of our introduced notation. We then introduce dynamic cut aggregation as an extension of our static framework in Section~\ref{sec:dynam-cut-aggr} and provide further theoretical results. We suggest a collection of practical dynamic aggregation schemes in Section~\ref{sec:aggregation-schemes} and implement them in our software framework. Finally, in Section~\ref{sec:numer-exper}, we conduct numerical experiments to evaluate our proposed methods before concluding the paper.

\section{Cut aggregation in L-shaped algorithms}
\label{sec:cut-aggregation-l}

We consider finite two-stage stochastic programs of the form
\begin{equation} \label{eq:sp}
  \begin{aligned}
    \minimize_{x \in \mathbb{R}^n, y_s \in \mathbb{R}^m} & \quad c^T x + \sum_{s = 1}^{N} \pi_s q_s^T y_s & \\
    \st & \quad Ax = b & \\
    & \quad T_s x + W y_s = h_s, \quad &&s = 1,\dots,N \\
    & \quad x \geq 0, \; y_s \geq 0, \quad &&s = 1,\dots,N,
  \end{aligned}
\end{equation}
where $A \in \mathbb{R}^{p \times n}$, $T_s \in \mathbb{R}^{q \times n}, \; s = 1,\dots,N$ and $W \in \mathbb{R}^{q \times m}$. Scenario-dependent data $\xi_s = \begin{pmatrix}
  q_{\omega} & T_{\omega} & h_{\omega}
\end{pmatrix}^T$ is drawn with probability $\pi_s$ from a discrete sample space $\Omega$. This formulation can be used for problems with infinite sample space and continuous distributions through sample-based techniques~\cite{saa,saacomp}. We use the natural decomposition into a first and second stage:
\begin{equation*}
  \begin{aligned}
    \minimize_{x \in \mathbb{R}^n} & \quad c^T x + \sum_{s = 1}^{N} \pi_s Q_s(x) & \\
    \st & \quad Ax = b & \\
    & \quad x \geq 0,
  \end{aligned}
\end{equation*}
where
\begin{equation*}
  \begin{aligned}
    Q_s(x) = \min_{y_s \in \mathrm{R}^m} & \quad q_s^T y_s \\
    \st & \quad Wy_s = h_s - T_s x \\
    & \quad y_s \geq 0.
  \end{aligned}
\end{equation*}

\subsection{The L-shaped algorithm}
\label{sec:l-shaped-algorithm}

The L-shaped algorithm decomposes~\eqref{eq:sp} into a master problem and $N$ subproblems. Consider the following epigraph form of~\eqref{eq:sp}:
\begin{equation} \label{eq:outerlin}
  \begin{aligned}
    \minimize_{x \in \mathbb{R}^n} & \quad c^T x + \theta \\
    \st & \quad Ax = b & \\
    & \quad \theta \geq Q(x) \\
    & \quad x \geq 0.
  \end{aligned}
\end{equation}
By duality, it holds that
\begin{equation} \label{eq:epigraph}
  \begin{aligned}
    Q(x) &= \sum_{s = 1}^{N}\pi_sQ_s(x) \\
    &= \sum_{s = 1}^{N} \pi_s\max_{\lambda_s \in \Lambda_s}{\lambda_s^T(h_s-T_sx)} \\
    &= \sum_{s = 1}^{N} \pi_s\max_{\lambda_s \in \bar{\Lambda}_s}{\lambda_s^T(h_s-T_sx)},
  \end{aligned}
\end{equation}
where $\Lambda_s = \Set{\lambda \in \mathbb{R}^q \given W^T\lambda \leq q_s}$ and $\bar{\Lambda}_s$ are the extreme points of $\Lambda_s$. Hence, a full representation of~\eqref{eq:outerlin} is given by
\begin{equation*}
  \begin{aligned}
    \minimize_{x \in \mathbb{R}^n} & \quad c^T x + \theta \\
    \st & \quad Ax = b & \\
    & \quad \theta \geq \sum_{s = 1}^{N} \pi_s{\lambda_s^T(h_s-T_sx)} && \quad \parentheses*{\lambda_1,\dots,\lambda_N} \in \bar{\Lambda}_1\times\dots\times\bar{\Lambda}_N \\
    & \quad x \geq 0.
  \end{aligned}
\end{equation*}
The main idea of the L-shaped algorithm is to generate increasingly tight supporting cutting planes of the piecewise linear function $Q(x)$. During the procedure, solution iterates $x_k$ are used to parameterize subproblems of the form:
\begin{equation} \label{eq:lssubprob}
  \begin{aligned}
    Q_s(x_k) = \min_{y_s \in \mathrm{R}^m} & \quad q_s^T y_s \\
    \st & \quad Wy_s = h_s - T_s x_k \\
    & \quad y_s \geq 0.
  \end{aligned}
\end{equation}
The optimal values are combined into the upper bound $Q(x_k) = \sum_{s = 1}^{N} Q_s(x_k)$. It follows from the same duality result that $\lambda_{s,k}^T(h_s - T_sx)$, where $\lambda_{s,k}$ is the dual optimizer of~\eqref{eq:lssubprob}, is a valid support function for $Q_s(x)$ at $x_k$. Moreover,
\begin{equation*}
  \sum_{s = 1}^{N}\pi_s \lambda_{s,k}^T(h_s-T_sx)
\end{equation*}
corresponds exactly to one of the facets of $Q(x)$ in the full representation~\eqref{eq:outerlin}. In the original formulation of the L-shaped algorithm~\cite{van_slyke_l-shaped_1969}, the above result is used in each iteration $k$ to construct \emph{optimality cuts}:
\begin{align*}
  \partial Q_{k} &= \sum_{s = 1}^{N} \pi_s\lambda_{s,k}^T T_s \\
  q_k &= \sum_{s = 1}^{N}\pi_s\lambda_{s,k}^Th_s.
\end{align*}
If the optimality cut is not satisfied by the current master iterate, i.e., if
\begin{equation*}
  \theta_k < q_k - \partial Q_{k}x_k,
\end{equation*}
then the optimality cut is included in the master problem as follows:
\begin{equation} \label{eq:lsmaster}
  \begin{aligned}
    \minimize_{x \in \mathbb{R}^n} & \quad c^T x + \theta \\
    \st & \quad Ax = b \\
    & \quad \partial Q_k x + \theta \geq q_k, \quad && \forall k \\
    & \quad x \geq 0.
  \end{aligned}
\end{equation}
If any second-stage problem is infeasible for the given $x$, \emph{feasibility cuts} can be generated and included in the master problem~\cite{van_slyke_l-shaped_1969}. The master problem is then re-solved to generate the next iterate $(x_{k+1},\theta_{k+1})$. This is repeated until the gap between the upper bound $Q(x_k)$ and lower bound $\theta_{k+1}$ becomes small, or if the latest optimality cut is already satisfied by the current master iterate, upon which the algorithm terminates. The L-shaped algorithm is finitely convergent because $W$ has a finite number of bases~\cite{van_slyke_l-shaped_1969}.

\subsection{The multi-cut L-shaped algorithm}
\label{sec:multicut-l-shaped}

The original L-shaped algorithm was extended in~\cite{Birge1988} by including all generated optimality cuts in a disaggregate form at each iteration. In other words, optimality cuts are generated for each subproblem:
\begin{align*}
  \partial Q_{s,k} &= \pi_s\lambda_{s,k}^T T_s \\
  q_{s,k} &= \pi_s\lambda_{s,k}^Th_s,
\end{align*}
and these then enter a modified master problem as follows:
\begin{equation} \label{eq:multils}
  \begin{aligned}
    \minimize_{x \in \mathbb{R}^n} & \quad c^T x + \sum_{s = 1}^{N} \theta_s \\
    \st & \quad Ax = b \\
    & \quad \partial Q_{s,k} x + \theta_s \geq q_{s,k}, \quad &&s = 1,\dots,N \quad \forall k \\
    & \quad x \geq 0.
  \end{aligned}
\end{equation}
A given disaggregate optimality cut not included in the master problem if it is already satisfied by the current master iterate, i.e., if
\begin{equation*}
  \theta_{s,k} \geq q_{s,k} - \partial Q_{s,k}x_k
\end{equation*}
The authors of~\cite{Birge1988} show that the resulting procedure will terminate in equal or fewer iterations than the original aggregate version~\cite{van_slyke_l-shaped_1969} if the major iterates coincide. A simple argument in favour of a multi-cut approach is that the master problem has more available information at each iteration and is therefore able to localize the set of optimal solutions faster. However, there is no general rule that the disaggregate master problem converges in fewer iterations for all problems. Also, the size of the master problem grows faster if the cuts are not aggregated, which has a negative effect on the time to solution. As a rule of thumb, the authors of~\cite{Birge1988} suggest that the single-cut approach should be preferred when the number of scenarios is considerably larger than the number of first stage constraints, i.e., when $N \gg p$. Finally, the authors suggest that it may be advantageous to adopt a so called ``hybrid approach'', where cuts are aggregated in separate clusters. We propose a framework around this idea, which we introduce in the following section.

\subsection{The aggregated L-shaped algorithm}
\label{sec:aggregated-l-shaped}

We develop a formalization for using aggregation in L-shaped algorithms. The hybrid approach suggested in~\cite{Birge1988} has since been explored in practice~\cite{Vladimirou1998,Trukhanov2010}, but a theoretical analysis is missing to the best of our knowledge. We devise a general framework for arbitrary aggregation approaches, including the hybrid approach. Consider the following definition.
\begin{definition}
  A \emph{partitioning scheme}
  \begin{equation} \label{eq:partitions}
    \mathcal{S} = \Set{\mathcal{S}_1, \dots, \mathcal{S}_A}
  \end{equation}
  of $N$ scenarios is a set of partitions, or aggregates, such that
  \begin{equation} \label{eq:partition-conditions}
    \begin{aligned}
      \mathcal{S}_a &\subseteq \Set{1, \dots, N}, &\qquad a = 1,\dots,A \\
      \mathcal{S}_a \cap \mathcal{S}_b &= \emptyset, &\qquad \forall a \neq b \\
      \bigcup_{a = 1}^{A} \mathcal{S}_a &= \Set{1, \dots, N}.
    \end{aligned}
  \end{equation}
\end{definition}
In an aggregated L-shaped algorithm, the results of solving subproblems in the same partition $\mathcal{S}_a$ are used to create aggregated optimality cuts
\begin{align*}
  \partial Q_{a,k} &= \sum_{s \in \mathcal{S}_a} \pi_s\lambda_{s,k}^T T_s \\
  q_{a,k} &= \sum_{s \in \mathcal{S}_a} \pi_s\lambda_{s,k}^Th_s,
\end{align*}
which then enter the master problem as follows:
\begin{equation} \label{eq:aggregatedls}
  \begin{aligned}
    \minimize_{x \in \mathbb{R}^n} & \quad c^T x + \sum_{a = 1}^{A} \theta_a \\
    \st & \quad Ax = b \\
    & \quad \partial Q_{a,k} x + \theta_a \geq q_{a,k}, \quad &&a = 1,\dots,A \quad \forall k \\
    & \quad x \geq 0.
  \end{aligned}
\end{equation}
A given aggregated optimality cut is not included in the master if it is already satisfied by the current master iterate, i.e., if
\begin{equation*}
  \theta_{a,k} \geq q_{a,k} - \partial Q_{a,k}x_k
\end{equation*}
We give a convergence proof for a general variant of this algorithm, where the partitioning scheme can vary over iterations, in a following section. Note that the partitioning scheme $\mathcal{S} = \Set{\mathcal{S}_1}$ with $\mathcal{S}_1 = \Set{1,\dots,N}$ corresponds to the original single-cut algorithm, while $\mathcal{S} = \Set{\Set{a} \given a \in \Set{1,\dots,N}}$ corresponds to the multi-cut algorithm. We introduce two entities that characterize any given partitioning scheme $\mathcal{S}$.
\begin{definition}
  The \emph{aggregation size} of the partitioning scheme $\mathcal{S}$ is given by
  \begin{equation*}
    A(\mathcal{S}) = \abs{\mathcal{S}}.
  \end{equation*}
\end{definition}
\begin{definition}
  The \emph{aggregation level} of the partitioning scheme $\mathcal{S}$ is given by
  \begin{equation*}
    A_L(\mathcal{S}) = \max_{a = 1,\dots,A(\mathcal{S})} \verts{\mathcal{S}_a}.
  \end{equation*}
\end{definition}
It is clear that $A(\mathcal{S}) = 1, \, A_L(\mathcal{S}) = N$ for single-cut L-shaped, and $A(\mathcal{S}) = N, \, A_L(\mathcal{S}) = 1$ for multi-cut L-shaped. Moreover, these values constitute the extremes in terms of these characteristics, i.e., $1 \leq A(\mathcal{S}) \leq N, \, 1 \leq A_L(\mathcal{S}) \leq N$ for any partitioning scheme $\mathcal{S}$.

We extend the worst-case complexity analysis developed in~\cite{Birge1988} to the aggregated case. Recall the following definition:
\begin{definition}
  Let $b_s$ represent the maximum number of different slopes of the piecewise linear function $Q_s(x)$ in any direction parallel to one of the axes. Then, $b = \max_s b_s$ is the \emph{slope number} of $Q(x)$.
\end{definition}
The worst-case complexity result developed by the authors of~\cite{Birge1988} is then given by the following theorem:
\begin{theorem}  \label{thm:lscomplexity}
  The maximum number of iterations required to obtain an optimal solution of~\eqref{eq:sp}, using the single-cut L-shaped algorithm, is given by
  \begin{equation} \label{eq:lsbound}
    \brackets*{1 + N(b-1)}^m,
  \end{equation}
  while the maximum number of iterations required to obtain an optimal solution of~\eqref{eq:sp}, using the multi-cut L-shaped algorithm, is given by
  \begin{equation} \label{eq:multibound}
    1 + N(b^m-1),
  \end{equation}
  where $b$ is the slope number of $Q(x)$.
\end{theorem}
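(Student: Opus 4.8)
The plan is to bound the number of major iterations by the number of \emph{distinct} cutting planes the algorithm can ever generate, and then to bound that number by counting the linear pieces (facets) of the relevant piecewise linear convex recourse functions. The starting point is the finite-convergence property already noted: since $W$ has finitely many bases, each dual optimizer $\lambda_{s,k}$ is an extreme point of $\Lambda_s$, and the gradient $-\sum_{s} \pi_s \lambda_{s,k}^T T_s$ of the aggregate cut is exactly one of the facet gradients appearing in the full representation~\eqref{eq:outerlin}. A cut that has already entered the master is satisfied at every subsequent iterate and can therefore never be regenerated; hence each non-terminating iteration must introduce at least one facet of $Q$ (single-cut) or of some $Q_s$ (multi-cut) that has not appeared before. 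Consequently it suffices to count facets, and I would establish the two bounds by counting the facets of $Q$ and of each $Q_s$, respectively.

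I would treat the multi-cut case first, as the cleaner direction. To each facet of $Q_s$ associate its gradient $g \in \mathbb{R}^m$; the $j$-th component $g_j$ is a slope of $Q_s$ in the direction of axis $j$, of which there are at most $b$ by the definition of the slope number. Thus every facet gradient lies in a Cartesian product of $m$ sets each of size at most $b$, and since a facet is determined by its gradient, $Q_s$ has at most $b^m$ facets. To assemble the bound I would then argue that the first iteration is forced to generate one cut for \emph{every} one of the $N$ subproblems, because each $\theta_s$ is initially unconstrained and hence every disaggregate cut is violated; this single iteration consumes $N$ of the at most $N b^m$ available facets at once. Each subsequent non-terminating iteration then adds at least one previously unseen facet of some $Q_s$, and at most $N(b^m-1)$ such facets remain, yielding the claimed bound of $1 + N(b^m-1)$.

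For the single-cut case the object to count is the facet set of the aggregate $Q = \sum_{s} \pi_s Q_s$. Here I would first establish a \emph{breakpoint-addition} estimate: restricting $Q$ to a line parallel to a coordinate axis gives a sum of $N$ univariate piecewise linear convex functions, each with at most $b-1$ breakpoints, so the restriction has at most $N(b-1)$ breakpoints and hence at most $1 + N(b-1)$ distinct slopes in that direction. Using this to control the number of distinct axis-$j$ slopes of $Q$ and then invoking the same product-over-coordinates argument as above gives at most $[1 + N(b-1)]^m$ facets of $Q$, and therefore at most $[1 + N(b-1)]^m$ single-cut iterations.

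The step I expect to be the main obstacle is the combinatorial counting lemma that underlies both bounds, namely that the total number of facets of a piecewise linear convex function is at most the product over coordinate directions of the number of distinct slopes in that direction. Making this precise requires that a facet be determined by its gradient and that each gradient coordinate genuinely range only over the per-direction slope set, which calls for careful handling of degeneracy and of faces that are not full-dimensional. In the aggregate case there is the further delicate point of reconciling the per-line breakpoint count with the global count of distinct slopes of $Q$, so that the additivity of breakpoints translates correctly into the factor $1 + N(b-1)$ in each coordinate. By comparison, the non-repetition property and the first-iteration bookkeeping that produces the asymmetric ``$+1$'' in the multi-cut bound are routine once the counting lemma is in place.
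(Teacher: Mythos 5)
Your proposal is correct and takes essentially the same route as the paper: the paper establishes this statement only as the specializations $A(\mathcal{S})=1$ and $A(\mathcal{S})=N$ of its proof of Theorem~\ref{thm:aggcomplexity}, and your argument — non-repetition of already-generated cuts, per-axis slope counting (breakpoint addition giving $1+N(b-1)$ slopes per direction for the aggregate $Q$, at most $b$ per direction for each $Q_s$), the product over the $m$ directions, and the first-iteration bookkeeping of $1$ facet for single-cut versus $N$ facets for multi-cut because the $\theta$ variables start unrestricted — matches that proof step for step. The only difference is cosmetic: you make the breakpoint-addition lemma and the facet-determined-by-gradient identification explicit, points that the paper (following~\cite{Birge1988}) leaves implicit.
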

Using similar arguments, we postulate and prove the following extended result for the aggregated L-shaped algorithm:
\begin{theorem} \label{thm:aggcomplexity}
  The maximum number of iterations required to obtain an optimal solution of~\eqref{eq:sp}, using an aggregated L-shaped algorithm that uses a partitioning scheme $\mathcal{S} = \Set{\mathcal{S}_1, \dots, \mathcal{S}_{A(\mathcal{S})}}$ satisfying~\eqref{eq:partition-conditions}, is given by
  \begin{equation} \label{eq:aggcomplexity}
    1 + \sum_{a = 1}^{A(\mathcal{S})} \brackets*{1+\abs{\mathcal{S}_a}(b-1)}^m - A(\mathcal{S}),
  \end{equation}
  where $b$ is the slope number of $Q(x)$.
\end{theorem}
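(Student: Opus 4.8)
The plan is to reduce the statement to the single-cut bound \eqref{eq:lsbound} applied separately to each aggregate, and then to account for the worst-case number of iterations in which the aggregated master problem \eqref{eq:aggregatedls} can keep generating cuts. First I would associate with every aggregate $\mathcal{S}_a$ the aggregated recourse function $Q_a(x) = \sum_{s \in \mathcal{S}_a} \pi_s Q_s(x)$, so that $Q(x) = \sum_{a=1}^{A(\mathcal{S})} Q_a(x)$ and, focusing as in Theorem~\ref{thm:lscomplexity} on optimality cuts, every aggregated optimality cut in \eqref{eq:aggregatedls} is a supporting hyperplane of exactly one of the $Q_a$. The variable $\theta_a$ thus approximates $Q_a$ in precisely the way $\theta$ approximates $Q$ in the single-cut algorithm, and each aggregate behaves internally like a single-cut L-shaped method run on the sub-ensemble $\mathcal{S}_a$.

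Next I would count the distinct cuts each aggregate can produce. Mirroring the slope-counting argument behind \eqref{eq:lsbound}, each $Q_s$ contributes at most $b-1$ breakpoints in any axis-parallel direction, so $Q_a$, being a sum of $\abs{\mathcal{S}_a}$ such functions, has at most $1 + \abs{\mathcal{S}_a}(b-1)$ distinct slopes in each direction. The passage from per-direction slope counts to the total number of linear pieces is exactly the step that produces the exponent $m$ in Theorem~\ref{thm:lscomplexity}; applying it to $Q_a$ in place of $Q$, with $\abs{\mathcal{S}_a}$ in place of $N$, shows that $Q_a$ has at most $\brackets*{1 + \abs{\mathcal{S}_a}(b-1)}^m$ distinct facets, and hence that at most this many distinct cuts can ever be appended to $\theta_a$.

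Finally I would assemble the global iteration bound, following the disaggregate accounting used for \eqref{eq:multibound}. The first iteration can reveal one facet for each of the $A(\mathcal{S})$ aggregates simultaneously, after which the termination test for \eqref{eq:aggregatedls} guarantees that the algorithm proceeds only if at least one previously unseen facet of some aggregate is generated. Since aggregate $a$ owns a budget of $\brackets*{1 + \abs{\mathcal{S}_a}(b-1)}^m$ facets, of which one is consumed at the first iteration, at most $\brackets*{1 + \abs{\mathcal{S}_a}(b-1)}^m - 1$ further single-facet iterations can be charged to it; summing over $a$ and adding the shared first iteration yields $1 + \sum_{a=1}^{A(\mathcal{S})} \parentheses*{\brackets*{1 + \abs{\mathcal{S}_a}(b-1)}^m - 1}$, which is \eqref{eq:aggcomplexity}. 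As a sanity check, $A(\mathcal{S}) = 1$ with $\abs{\mathcal{S}_1} = N$ collapses the sum to \eqref{eq:lsbound}, while $A(\mathcal{S}) = N$ with all $\abs{\mathcal{S}_a} = 1$ reproduces \eqref{eq:multibound}, so the bound interpolates correctly between the extremes of Theorem~\ref{thm:lscomplexity}.

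I expect the main obstacle to be the combinatorial bookkeeping of the last step rather than the geometry of the first two. One must argue carefully that the worst case is serial---that every non-terminating iteration can be charged to at least one genuinely new facet of a single aggregate---and that no facet is double-counted across aggregates; this is what pins the additive constants $+1$ and $-A(\mathcal{S})$ and separates \eqref{eq:aggcomplexity} from the loose product bound $\prod_a \brackets*{1+\abs{\mathcal{S}_a}(b-1)}^m$. A secondary subtlety, inherited from Theorem~\ref{thm:lscomplexity}, is to make the slope-to-facet passage rigorous for the summed function $Q_a$, ensuring the per-direction breakpoint count genuinely controls the number of full-dimensional pieces of $Q_a$ in $\mathbb{R}^m$.
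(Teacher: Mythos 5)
Your proposal is correct and follows essentially the same route as the paper's proof: the same per-aggregate slope count of $1+\abs{\mathcal{S}_a}(b-1)$ raised to the power $m$ (obtained by applying the single-cut counting of Theorem~\ref{thm:lscomplexity} with $\abs{\mathcal{S}_a}$ in place of $N$), and the same worst-case serial accounting in which the first iteration consumes one facet per aggregate, giving $1 + M - A(\mathcal{S})$. Your bookkeeping identity $1 + \sum_a \parentheses*{\brackets*{1+\abs{\mathcal{S}_a}(b-1)}^m - 1}$ is algebraically the paper's expression~\eqref{eq:aggcomplexity}, so there is no substantive difference.
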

\begin{proof}
  See~\ref{sec:proofs}.
\end{proof}
Because $\abs{\mathcal{S}_a} \leq A_L(\mathcal{S})$ holds by construction, we can bound the sum in~\eqref{eq:aggcomplexity}. This yields the following upper bound on the worst-case complexity:
\begin{corollary} \label{thm:aggsimplecomplexity}
  The maximum number of iterations of an aggregated L-shaped algorithm, using a partitioning scheme $\mathcal{S} = \Set{\mathcal{S}_1, \dots, \mathcal{S}_A}$ satisfying~\eqref{eq:partition-conditions}, is upper bounded by
  \begin{equation} \label{eq:aggbound}
    1 + A(\mathcal{S}) \parentheses*{\brackets*{1+A_L(\mathcal{S})(b-1)}^m - 1},
  \end{equation}
  where $b$ is the slope number of $Q(x)$, and $m$ is the row dimension of $W$.
\end{corollary}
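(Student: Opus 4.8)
The plan is to obtain the corollary as an immediate consequence of Theorem~\ref{thm:aggcomplexity} by bounding the summation in~\eqref{eq:aggcomplexity} term by term. First I would invoke the exact worst-case iteration count established in that theorem, so that the only remaining task is to replace the scenario-dependent summands $\brackets*{1 + \abs{\mathcal{S}_a}(b-1)}^m$ by a uniform upper bound expressed through the aggregation level $A_L(\mathcal{S})$.

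The key observation is monotonicity. Since $b$ is a slope number it satisfies $b \geq 1$, so $b-1 \geq 0$, and since $m \geq 1$ the map $t \mapsto \brackets*{1 + t(b-1)}^m$ is nondecreasing in $t$ on the nonnegative reals: the base is at least $1$ and the exponent is a positive integer, so neither factor can reverse the order. By the definition of the aggregation level, $\abs{\mathcal{S}_a} \leq A_L(\mathcal{S})$ holds for every $a$, whence each summand obeys
\begin{equation*}
  \brackets*{1 + \abs{\mathcal{S}_a}(b-1)}^m \leq \brackets*{1 + A_L(\mathcal{S})(b-1)}^m.
\end{equation*}

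Summing this inequality over the $A(\mathcal{S})$ partitions replaces the sum in~\eqref{eq:aggcomplexity} by $A(\mathcal{S})\brackets*{1 + A_L(\mathcal{S})(b-1)}^m$. Substituting back and collecting the two terms proportional to $A(\mathcal{S})$ then gives
\begin{equation*}
  1 + A(\mathcal{S})\brackets*{1 + A_L(\mathcal{S})(b-1)}^m - A(\mathcal{S}) = 1 + A(\mathcal{S})\parentheses*{\brackets*{1 + A_L(\mathcal{S})(b-1)}^m - 1},
\end{equation*}
which is exactly the bound~\eqref{eq:aggbound}.

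The argument is essentially arithmetic, so I do not anticipate a genuine obstacle; the only point requiring care is justifying the monotonicity step, which hinges on the standing facts $b \geq 1$ and $m \geq 1$ so that neither the sign of $b-1$ nor the growth of the power $u \mapsto u^m$ can flip the inequality. I would state these two facts explicitly before the term-wise bound to keep the derivation rigorous.
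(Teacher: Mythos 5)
Your proposal is correct and takes essentially the same route as the paper, which obtains the corollary by observing that $\abs{\mathcal{S}_a} \leq A_L(\mathcal{S})$ holds by construction and bounding the sum in~\eqref{eq:aggcomplexity} term by term, exactly as you do. Your explicit justification of the monotonicity step via $b \geq 1$ and $m \geq 1$ merely spells out what the paper leaves implicit.
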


Note that, the original results in Theorem~\ref{thm:lscomplexity} are recovered for the single-cut L-shaped algorithm ($A(\mathcal{S}) = 1, \, A_L(\mathcal{S}) = N$) and for the multi-cut L-shaped algorithm ($A(\mathcal{S}) = N, \, A_L(\mathcal{S}) = 1$). The upper bound~\eqref{eq:aggbound} is easier to reason about than~\eqref{eq:aggcomplexity}, but it could be pessimistic for irregular aggregation schemes where $A(\mathcal{S}) > 1$ and $A_L(\mathcal{S})$ is close to $N$. Both expressions~\eqref{eq:aggcomplexity} and~\eqref{eq:aggbound} can grow astronomically large already for medium-scale problems. However, the worst-case results still indicate which aggregation schemes could be more performant. We can observe that decreasing the aggregation level $A_L(\mathcal{S})$ decreases the worst-case complexity. In addition, we can note that the aggregated L-shaped algorithm will in general have better worst-case performance than the single-cut L-shaped algorithm for large-scale problems. For example, the worst-case complexity of a uniform partitioning scheme, where $A_L(\mathcal{S}) = N/A(\mathcal{S})$, is on the order of
\begin{equation*}
  \frac{N^m(b-1)^m}{A(\mathcal{S})^{m-1}},
\end{equation*}
as opposed to the single-cut complexity $N^m(b-1)^m$. The size of the master problem grows slower for the aggregated L-shaped algorithm than the multi-cut L-shaped algorithm because $A(\mathcal{S}) \leq N$ constraints are added at each iteration as opposed to $N$ cuts. Thus, provided that the average-case iteration complexity of an aggregated L-shaped algorithm is not far worse than the multi-cut approach, performance improvements are possible.

\section{Review of L-shaped aggregation schemes}
\label{sec:review-l-shaped}

A comprehensive review of past contributions related to algorithmic improvements of L-shaped algorithms is provided in~\cite{Rahmaniani2017} and also in the dissertation~\cite{Wolf2014a}. We give an overview of contributions related to aggregation strategies which to the best of our knowledge could be considered the state-of-the-art. We also try to identify shortcomings in these prior strategies.

\subsection{Partial cut aggregation}
\label{sec:part-cut-aggr}

The first usage of an aggregation approach of type~\eqref{eq:aggregatedls} was presented in~\cite{Vladimirou1998}. The main motivation is to reduce communication overhead in the distributed setting as well as time to solution when re-solving the master problem. The $N$ subproblems are distributed uniformly on $r$ worker nodes. This topology is then used to induce a uniform partitioning scheme $\mathcal{S} = \Set{\mathcal{S}_1, \dots, \mathcal{S}_W}$ where $\abs{\mathcal{S}_w}$ is the number of subproblems on worker $w$. This minimizes the amount of data passed from every worker at each iteration. The numerical results do not clearly favor the aggregated approach over a multi-cut approach. However, the problem sizes were only on the order of $\num{e+4}$ variables and constraints in the performed experiments.

The partial-cut approach has since been shown to be effective in various applied problems~\cite{Zhang2015,Yang2016}. Moreover, the results in~\cite{Trukhanov2010,Kwon2017} suggest that many problems are solved more efficiently with an aggregation level somewhere between the single-cut and multi-cut, i.e., partitioning schemes $\mathcal{S}$ where $1 < A_L(\mathcal{S}) < N$. However, the beneficial effect on solution time appears problem-dependent and the optimal aggregation level $A_L(\mathcal{S})$ is not known \emph{a priori}.

\subsection{Adaptive multi-cut aggregation}
\label{sec:adapt-mult-aggr}

A more recent aggregation approach is presented in~\cite{Trukhanov2010}. The authors suggest an adaptive aggregation policy, where the partitioning $\mathcal{S}^k = \Set{\mathcal{S}^k_1, \dots, \mathcal{S}^k_{A_k}}$ is allowed to vary at each iteration $k$. The master problem is of the form~\eqref{eq:aggregatedls}. Hence, if $\mathcal{S}^k \neq \mathcal{S}^{k-1}$, then the cuts generated at iteration $k$ will not form valid supports for the second-stage objective because the master variables $\braces{\theta_a}_{a = 1}^{A_k}$ adhere to a specific partitioning. This is alleviated by repartitioning the master variables to match the new partitioning $\mathcal{S}_k$. Specifically, if $\mathcal{S}_i^{k-1},\dots,\mathcal{S}_j^{k-1}$ are aggregated in $\mathcal{S}^k$, then the master variables $\theta_i,\dots,\theta_j$ are removed from~\eqref{eq:aggregatedls} and replaced by a single new variable. Cuts from previous iterations are aggregated to adhere to the new partitioning. The authors suggest that disaggregation is also possible, but intractable in practice since it requires bookkeeping of all cuts. Consequently, $A_L(\mathcal{S}^k) \geq A_L(\mathcal{S}^{k-1})$ holds for the suggested adaptive aggregation scheme. The idea is therefore to initialize with no aggregation and run the adaptive aggregation scheme with the hope of eventually identifying an efficient aggregation level for the given problem.

The authors of~\cite{Trukhanov2010} present two heuristic rules, based on a redundancy threshold and a bound on the number of aggregates, to decide how to determine the subsequent partitioning $\mathcal{S}^{k}$ based on $\mathcal{S}^{k-1}$. They mention trying other rules, based on for example cut similarity, but state that such efforts yield no significant gains in performance. The authors also perform exhaustive tests of uniform aggregation schemes of fixed size, which they refer to as \emph{static aggregation}. These results also indicate that many problems are solved faster when $1 < A_L(\mathcal{S}) < N$.

We identify a few drawbacks with adaptive aggregation. The first drawback is that the partitioning of the master variables $\theta_a,\,a = 1,\dots,A_k$ must always match the current partitioning scheme $\mathcal{S}^k$ during the adaptive procedure. Consequently, any changes to the partitioning scheme infer deleting and adding columns in the master problem, which can lead to significant overhead for large sparse problems. Moreover, the cuts from previous iterations have to be updated to adhere to the new partitioning. This incurs a large number of constraint replacements, which also increases the overhead in master iterations. The second drawback is that the nature of the implementation makes disaggregation of cuts non-performant. Therefore, the partitioning can only be made coarser. Finally, even though the adaptive method is introduced to overcome the fact that the optimal aggregation level is not known a priori, the method is governed by tunable parameters whose values are shown to greatly influence runtime.

\subsection{Cut consolidation}
\label{sec:cut-consolidation}

Another aggregation technique is presented in~\cite{Wolf2013}. The technique, \emph{cut consolidation}, is adopted to reduce the size of the master problem, and acts independently of the aggregation scheme used. The idea is to prune historical cuts that have become inactive, but retain their aggregation to keep some information in the master. Specifically, the following consolidation scheme is used. If the number of cuts
\begin{equation*}
  \partial Q_{s,k} x + \theta_s \geq q_{s,k}, \quad s = 1,\dots,N
\end{equation*}
from a previous iteration $k$ that are inactive in the master reaches a user-defined threshold, then all cuts from iteration $k$ are removed from the master and the special aggregate
\begin{equation*}
  \sum_{s = 1}^{N}\partial Q_{s,k} x + \sum_{s = 1}^{N}\theta_s \geq \sum_{s = 1}^{N} q_{s,k}
\end{equation*}
is added instead. Numerical results indicate that cut consolidation can considerably reduce the time to solution, especially in combination with a partial aggregation scheme. Similar to the adaptive aggregation approach, the proposed cut consolidation is governed by two tunable threshold parameters which have non-negligent impact on the runtime. Furthermore, cut consolidation does not reduce communication latency from cut passing. This is however not a large issue as the method can be combined naturally with partial cut aggregation.

\section{Dynamic cut aggregation}
\label{sec:dynam-cut-aggr}

We propose a new aggregation procedure, which we call \emph{dynamic cut aggregation}. We introduce the procedure and derive convergence and complexity results in this section and then propose practical implementations in the next section. The main idea of our approach is to retain the structure of the multi-cut master problem~\eqref{eq:multils}, while still allowing for a dynamic partitioning scheme that can vary over iterations. We build upon the concepts introduced in Section~\ref{sec:aggregated-l-shaped}.
\begin{definition}
  A \emph{dynamic partitioning scheme}
  \begin{equation} \label{eq:dynscheme}
    \mathcal{D} = \braces{\mathcal{S}^k}_{k = 1}^{\infty}
  \end{equation}
  is a sequence of partitioning schemes $\mathcal{S}^k = \Set{\mathcal{S}^k_1, \dots, \mathcal{S}^k_{A_k}},$ each satisfying~\eqref{eq:partition-conditions}.
\end{definition}
Next, we pose an L-shaped algorithm with dynamic cut aggregation. Our reformulated master problem has the following form:
\begin{equation} \label{eq:dynamicls}
  \begin{aligned}
    \minimize_{x \in \mathbb{R}^n} & \quad c^T x + \sum_{s = 1}^{N} \theta_s \\
    \st & \quad Ax = b  \\
    & \quad \sum_{s \in \mathcal{S}^{k}_{a}} \partial Q_{s,k} x + \sum_{s \in \mathcal{S}^{k}_{a}} \theta_s \geq \sum_{s \in \mathcal{S}^{k}_{a}} q_{s,k}, \quad && a = 1,\dots,A_k,\quad \mathcal{S}^k \in \mathcal{D} \quad \forall k \\
    & \quad x \geq 0.
  \end{aligned}
\end{equation}
Again, a new cut aggregate is only added to the master problem if it is not satisfied by the current master iterate, i.e., if
\begin{equation*}
  \sum_{s \in \mathcal{S}^k_a} \theta_{s,k} < \sum_{s \in \mathcal{S}^k_a} \parentheses*{q_{s,k} - \partial Q_{s,k} x_k}. \quad \mathcal{S}^k_a \in \mathcal{S}^k
\end{equation*}
If the partitioning scheme is fixed every iteration $\mathcal{S}^k = \Set{\mathcal{S}_1,\dots,\mathcal{S}_A}$, the aggregated master problem~\eqref{eq:aggregatedls} is recovered through the variable substitutions
\begin{equation*}
  \theta_a = \sum_{s \in \mathcal{S}_a} \theta_s \quad a = 1,\dots,A.
\end{equation*}

With our reformulation~\eqref{eq:dynamicls}, the number of master columns is not affected by the changes to the partitioning scheme and cuts from previous iterations remain valid. Moreover, disaggregation is possible; so, the partitioning scheme can vary between single-cut and multi-cut at each iteration. In this way, we address some drawbacks of the adaptive aggregation method. However, our approach requires separate second-stage objective variables $\theta_s$ in the master problem for each of the $N$ subproblems. This increases the memory footprint of the master problem as the scenario count $N$ grows, which reduces the scalability of the approach. Moreover, without specifying how the partitioning schemes $\mathcal{S}^k$ should be chosen the method is not practical. However, the flexibility of the formulation~\eqref{eq:dynamicls} allows us to formulate a large variety of implementable aggregation schemes, which we present Section~\ref{sec:aggregation-schemes}. We will also propose a scheme that can overcome the drawback of an increased number of master columns.

\subsection{Convergence}
\label{sec:convergence}

We give a proof of finite convergence for the L-shaped algorithm with dynamic cut aggregation.
\begin{theorem} \label{thm:dynlshapedconvergence}
  An L-shaped algorithm that uses dynamic cut aggregation, with a dynamic partitioning scheme $\mathcal{D} = \braces{\mathcal{S}^k}_{k = 1}^{\infty}$ for which the partitioning scheme $\mathcal{S}^k$ at each iteration satisfies the conditions~\eqref{eq:partition-conditions}, converges to an optimal solution of~\eqref{eq:sp} in a finite number of iterations.
\end{theorem}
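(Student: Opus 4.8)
The plan is to reduce the convergence of the dynamically aggregated algorithm to the known finite convergence of the multi-cut L-shaped algorithm, by showing that every cut added to the dynamic master problem~\eqref{eq:dynamicls} is a valid (possibly non-facet) inequality that the multi-cut master would also respect, and that the dynamic algorithm cannot cycle indefinitely. The key structural observation I would exploit is the variable substitution $\theta_a = \sum_{s \in \mathcal{S}_a} \theta_s$: because the reformulated master retains one variable $\theta_s$ per subproblem, an aggregated cut over a partition $\mathcal{S}^k_a$ is nothing more than the \emph{sum} of the $\abs{\mathcal{S}^k_a}$ individual disaggregate cuts that the multi-cut algorithm~\eqref{eq:multils} would generate at the same iterate $x_k$. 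Hence each dynamic cut is implied by the corresponding collection of multi-cut supports, and conversely the feasible region defined by~\eqref{eq:dynamicls} always contains the multi-cut feasible region restricted to the cuts generated so far.

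First I would establish \emph{validity}: for any partition $\mathcal{S}^k_a$, the inequality $\sum_{s \in \mathcal{S}^k_a}\partial Q_{s,k}x + \sum_{s \in \mathcal{S}^k_a}\theta_s \geq \sum_{s \in \mathcal{S}^k_a} q_{s,k}$ holds at the optimum of~\eqref{eq:sp}, since $\theta_s \geq Q_s(x)$ at optimality and each $\partial Q_{s,k}x + \theta_s \geq q_{s,k}$ is a valid support of $Q_s$. Summing valid inequalities preserves validity, so the dynamic algorithm never cuts off an optimal point regardless of how $\mathcal{S}^k$ is chosen. Second I would argue \emph{finiteness}. The crucial point is that the dual vertices $\lambda_{s,k}$ are drawn from the finite set $\bar\Lambda_s$ of extreme points of $\Lambda_s$, of which there are finitely many because $W$ has finitely many bases (the same fact underpinning finite convergence in~\cite{van_slyke_l-shaped_1969}). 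Therefore only finitely many distinct disaggregate cuts $(\partial Q_{s,k}, q_{s,k})$ can ever be generated across all iterations and all subproblems. Since every dynamic aggregate cut is a sum of such disaggregate cuts, and a cut is added only when it is violated by the current iterate, I would show that the same aggregate cut cannot be generated twice while remaining violated.

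The main obstacle, and the step requiring the most care, is ruling out cycling caused by the \emph{changing} partitioning scheme: unlike the fixed-partition case, two successive iterations may group the same subproblems differently, so a naive ``finitely many distinct cuts'' argument does not immediately bound the iteration count. I would address this by working at the level of the underlying disaggregate cut pool rather than the aggregates. The plan is to show that the termination test $\sum_{s\in\mathcal{S}^k_a}\theta_{s,k} < \sum_{s\in\mathcal{S}^k_a}(q_{s,k}-\partial Q_{s,k}x_k)$ guarantees that, at any non-terminating iteration, at least one subproblem $s$ produces a dual vertex $\lambda_{s,k}$ whose support is not yet implied by the accumulated cuts; otherwise every aggregate over any partition would be satisfied and the algorithm would stop. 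Because the pool of available disaggregate supports is finite, new information can be injected only finitely often, so after finitely many iterations every partition of every generated vertex is already satisfied, forcing termination. I would then invoke the facet correspondence established in~\eqref{eq:epigraph} to conclude that the terminal iterate is optimal for~\eqref{eq:sp}, completing the proof.
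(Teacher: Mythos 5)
Your overall architecture (validity of summed supports, finiteness via the finite dual vertex sets, optimality at termination via the facet correspondence) matches the paper's proof, and you correctly identify the one step that needs care — cycling under a changing partition. However, your resolution of that step has a genuine gap: you propose to count progress in the \emph{disaggregate} cut pool, arguing that each non-terminating iteration injects a disaggregate support ``not yet implied by the accumulated cuts,'' and that finiteness of the pool then bounds the iteration count. The problem is that an aggregated cut does not imply its summands. When the violated aggregate over $\mathcal{S}^k_a$ is added to the master, the individual support $\partial Q_{s,k}x + \theta_s \geq q_{s,k}$ that witnessed the violation remains \emph{unimplied} by the accumulated constraints: a later iterate can violate it again while oversatisfying the other summands of whatever aggregate it then sits in. So no disaggregate support ever transitions from ``not implied'' to ``implied'' as a consequence of adding the aggregate, and the monotone-progress argument fails; the same support can witness violations at arbitrarily many later iterations, each time inside a different partition. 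Indeed, if your counting worked it would bound the iterations by roughly $\sum_s \abs{\bar{\Lambda}_s}$, which is incompatible with the worst-case bound~\eqref{eq:dynaggcomplexity} of Theorem~\ref{thm:dynaggcomplexity}, whose binomial and Bell-number terms exist precisely because progress occurs only at the level of aggregates.

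The fix — and what the paper's proof in effect does — is to count distinct \emph{aggregated} cuts. An aggregated cut is determined by a subset $\mathcal{S}^k_a \subseteq \braces{1,\dots,N}$ together with a tuple of dual extreme points $\lambda_{s,k} \in \bar{\Lambda}_s$ for $s \in \mathcal{S}^k_a$, and there are finitely many such pairs since each $\bar{\Lambda}_s$ is finite. A cut is added only when violated by the current master iterate; since that iterate satisfies every constraint already present in the master~\eqref{eq:dynamicls}, a violated cut cannot coincide with any previously added cut, and once added it can never be violated again. Hence only finitely many cut-adding iterations occur, and at the first iteration in which every aggregate is satisfied, summing over the partition gives $\sum_{s=1}^{N}\theta_{s,k} \geq \sum_{s=1}^{N}\pi_s\lambda_{s,k}^T(h_s - T_s x_k) = Q(x_k)$, after which your final optimality step goes through (the paper additionally uses that the $\theta_{s,k}$ are free in~\eqref{eq:dynamicls} apart from the cut constraints to conclude $\theta_k = Q(x_k)$). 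One further minor omission: the paper first disposes of second-stage infeasibility by assuming, without loss of generality, that all iterates are second-stage feasible, falling back on the standard finite feasibility-cut argument; your proposal silently assumes feasible iterates throughout.
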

\begin{proof}
See~\ref{sec:proofs}.
\end{proof}

\subsection{Complexity}
\label{sec:complexity}

Since the partitioning scheme used in dynamic cut aggregation can vary with iterations, the worst-case result in Theorem~\ref{thm:aggcomplexity} does not hold and must be extended. First, we introduce some well-known combinatorial concepts that are required in the analysis.
\begin{definition}
  A \emph{k-combination} of $N$ elements is a subset of $1,\dots,N$ of size $k$. The number of $k$-combinations out of $N$ elements is denoted by $\binom{N}{k}$.
\end{definition}
\begin{definition}
  The \emph{Stirling number of the second kind} is the number of ways to partition $N$ elements into $k$ non-empty subsets, and is denoted by $\stirling{N}{k}$.
\end{definition}
\begin{definition}
  The $N$th \emph{Bell number}, denoted by $B_N$, is the number of possible partitionings of $N$ elements. In terms of Stirling numbers it is given by
  \begin{equation*}
    B_N = \sum_{k = 1}^{N}\stirling{N}{k}
  \end{equation*}
\end{definition}
We can now postulate and prove the following result for dynamic cut aggregation:
\begin{theorem} \label{thm:dynaggcomplexity}
  The maximum number of iterations required to obtain an optimal solution of~\eqref{eq:sp}, using an L-shaped algorithm that uses dynamic cut aggregation with a dynamic partitioning scheme $\mathcal{D} = \braces{\mathcal{S}^k}_{k = 1}^{\infty}$, is given by
  \begin{equation} \label{eq:dynaggcomplexity}
    2 + \sum_{a_L = 1}^{N} \binom{N}{a_L} \brackets*{1+a_L(b-1)}^m - \sum_{a_L = 1}^{N} \stirling{N}{a_L} - A_0,
  \end{equation}
  where $b$ is the slope number of $Q(x)$.
\end{theorem}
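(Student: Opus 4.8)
The plan is to mirror the counting argument behind Theorem~\ref{thm:aggcomplexity}, adapting it to the fact that the block structure in~\eqref{eq:dynamicls} may change at every iteration. The first step is the book-keeping that underlies all of these bounds: in the reformulated master~\eqref{eq:dynamicls} every previously generated cut is retained and remains valid, an aggregated inequality is added at iteration $k$ only when it is currently violated, and once present it is never re-added (a re-generated inequality is satisfied). Hence each non-terminal iteration installs at least one affine inequality that has not appeared before, and the iteration count is controlled by the number of \emph{distinct} aggregated cuts that the run can ever install, plus boundary terms. Finite convergence (Theorem~\ref{thm:dynlshapedconvergence}) guarantees this is a count over a finite process.

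Next I would bound the pool of distinct cuts. Because the partitioning scheme $\mathcal{S}^k$ ranges over arbitrary partitions of $\Set{1,\dots,N}$ as $k$ varies, any nonempty subset $\mathcal{S}\subseteq\Set{1,\dots,N}$ can occur as a block. For a block of size $a_L$, the aggregated function $\sum_{s\in\mathcal{S}}\pi_s Q_s(x)$ has at most $a_L(b-1)$ slope changes along each axis, hence at most $\brackets*{1+a_L(b-1)}^m$ facets, exactly as argued for Theorem~\ref{thm:aggcomplexity}. Since there are $\binom{N}{a_L}$ subsets of size $a_L$, summing over sizes gives $\sum_{a_L=1}^N \binom{N}{a_L}\brackets*{1+a_L(b-1)}^m$ as an over-count of the available distinct cuts. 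Combining this pool with the per-iteration progress yields a preliminary static-style bound: if $T$ distinct cuts are installed over the run and the first iteration installs one cut for each of the $A_0$ blocks of the initial scheme simultaneously, then the number of iterations $K$ obeys $T\ge A_0+(K-1)$, so $K\le T-A_0+1$. Taking $T$ up to the full pool gives $1+\sum_{a_L=1}^N\binom{N}{a_L}\brackets*{1+a_L(b-1)}^m-A_0$, the direct analogue of the $-A$ term in~\eqref{eq:aggcomplexity}.

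The decisive step is to sharpen $T$ by the Bell number $\sum_{a_L=1}^{N}\stirling{N}{a_L}=B_N$. The structural fact that distinguishes~\eqref{eq:dynamicls} from the static master~\eqref{eq:aggregatedls} is that it \emph{retains} the individual variables $\theta_s$; consequently the aggregated cut over a block $\mathcal{S}$ is literally the sum of the aggregated cuts over the blocks of any finer partition of $\mathcal{S}$ whenever the underlying dual vertices agree. This forces redundancies: once a family of finer cuts is in the master, the coarser aggregate they sum to is already implied and can never be installed as a violated inequality. I would show that the number of cuts thereby lost is exactly $B_N-1$, giving $T\le \sum_{a_L=1}^N\binom{N}{a_L}\brackets*{1+a_L(b-1)}^m-\parentheses*{B_N-1}$, and substituting into $K\le T-A_0+1$ produces precisely $2+\sum_{a_L=1}^N\binom{N}{a_L}\brackets*{1+a_L(b-1)}^m-\sum_{a_L=1}^{N}\stirling{N}{a_L}-A_0$, i.e.\ the decomposition $\brackets*{1+C-A_0}-\parentheses*{B_N-1}$ with $C$ the binomial pool.

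The hard part will be making this last correction rigorous, since redundancy is relative to the cuts currently present rather than absolute, so one cannot simply delete redundant cuts from the pool. I would control it by a charging argument that assigns each unavoidable redundancy to a distinct non-trivial partition of $\Set{1,\dots,N}$, using the disjointness and covering conditions~\eqref{eq:partition-conditions} to guarantee the charges do not collide, and thereby bounding the number of simultaneously realizable distinct cuts by $C-(B_N-1)$ for \emph{every} admissible instance and scheme. As consistency checks I would confirm the expression collapses correctly at $N=1$ (where $B_1-1=0$, recovering $b^m$), that it dominates the static bound~\eqref{eq:aggcomplexity} for every constant scheme, and that it exceeds the single-cut and multi-cut bounds~\eqref{eq:lsbound} and~\eqref{eq:multibound} of Theorem~\ref{thm:lscomplexity}, as it must since a dynamic scheme subsumes any fixed one.
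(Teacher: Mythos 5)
Your accounting in the first two steps matches the paper's proof: there, too, the worst case identifies a single new facet per iteration, $A_0$ facets are identified at the first iteration, and the pool of facets over all admissible aggregates is $M=\sum_{a_L=1}^{N}\binom{N}{a_L}\brackets*{1+a_L(b-1)}^m$, giving the preliminary count $1+M-A_0$. The genuine gap is in your decisive step, the $-(B_N-1)$ correction. You attribute it to summation redundancy --- since the $\theta_s$ stay disaggregate in~\eqref{eq:dynamicls}, a coarse aggregated cut is the sum of finer aggregated cuts with matching dual vertices, hence implied once the finer ones are installed --- and you claim a charging argument will show that exactly $B_N-1$ cuts are lost this way on \emph{every} run. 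That mechanism does not produce a uniform count: implication-based foreclosure depends on the installation order and on the dual vertices actually agreeing. A run that installs all cuts of a coarse aggregate before any cuts of its constituents loses nothing to this redundancy, whereas a run that first completes all singleton cuts forecloses every coarse aggregate cut whose dual vertices match --- far more than $B_N-1$ of them for moderate $N$. So the quantity your charging argument must establish, a loss of exactly (or at least) $B_N-1$ simultaneously realizable cuts for every admissible instance and scheme, is false in one direction and slack in the other, and the plan as stated cannot be completed.

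The paper's subtraction has a different provenance: termination, not implication. When the algorithm converges, all facets corresponding to the partitioning scheme active in the final iteration have been identified, and in the worst case each of the other $B_N-1$ possible partitioning schemes is at that moment exactly one facet short. Those $B_N-1$ facets are never identified because the algorithm has stopped, so they are removed from the pool, yielding $1+M-(B_N-1)-A_0=2+\sum_{a_L=1}^{N}\binom{N}{a_L}\brackets*{1+a_L(b-1)}^m-\sum_{a_L=1}^{N}\stirling{N}{a_L}-A_0$. Note also a difference in kind: the paper's argument is a worst-case facet-counting argument in the style of Theorem~\ref{thm:lscomplexity} (and of its own proof of Theorem~\ref{thm:aggcomplexity}), not the per-run bound on the number of installable distinct cuts that you set out to prove, which is a strictly stronger and unestablished statement. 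If you want to repair your attempt, replace the redundancy/charging step with the termination argument: the run ends as soon as one scheme's facets are exhausted, and in the worst case every other scheme is then one facet from completion.
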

\begin{proof}
See~\ref{sec:proofs}.
\end{proof}
We can obtain a tighter bound by imposing restrictions on the dynamic partitioning scheme. For example, we can limit the size of the aggregates at each iteration, which simply removes summands in~\eqref{eq:dynaggcomplexity}. The following result is obtained:
\begin{corollary} \label{cor:boundeddynaggcomplexity}
  The maximum number of iterations of an L-shaped algorithm with dynamic cut aggregation, where the dynamic partitioning scheme $\mathcal{D}$ satisfies
  \begin{equation*}
      \underline{A}_L(\mathcal{D}) \leq A_L(\mathcal{S}^k) \leq \bar{A}_L(\mathcal{D}) \quad \forall \mathcal{S}^k \in \mathcal{D}
  \end{equation*}
  is given by
  \begin{equation} \label{eq:boundeddynaggcomplexity}
    2 + \sum_{a_L = \underline{A}_L(\mathcal{D})}^{\bar{A}_L(\mathcal{D})} \binom{N}{a_L} \brackets*{1+a_L(b-1)}^m - \sum_{a_L = \underline{A}_L(\mathcal{D})}^{\bar{A}_L(\mathcal{D})} \stirling{N}{a_L} - A_0,
  \end{equation}
  where $b$ is the slope number of $Q(x)$.
\end{corollary}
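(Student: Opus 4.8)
The plan is to treat Corollary~\ref{cor:boundeddynaggcomplexity} as a direct specialization of Theorem~\ref{thm:dynaggcomplexity}, re-examining the cut-counting argument behind~\eqref{eq:dynaggcomplexity} and restricting the range of both sums to the admissible aggregate sizes. Recall that the bound~\eqref{eq:dynaggcomplexity} is assembled size by size: for each $a_L$, the factor $\binom{N}{a_L}$ counts the aggregates of cardinality $a_L$ that a dynamic scheme could ever form, while $\brackets*{1+a_L(b-1)}^m$ bounds the number of distinct optimality cuts such an aggregate can produce, since $\sum_{s \in \mathcal{S}^k_a}\pi_s Q_s(x)$ is a sum of $a_L$ piecewise-linear functions and therefore has at most $1 + a_L(b-1)$ distinct slopes along each of the $m$ axis directions, by the same slope-counting argument that underlies Theorem~\ref{thm:lscomplexity}. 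The subtracted term $\stirling{N}{a_L}$, together with the additive constants $+2$ and $-A_0$, forms the batch and termination correction: it discounts the cuts generated simultaneously when partitions are introduced, with the initial partition contributing $A_0$ aggregates, and accounts for the final iteration.

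First, I would invoke the hypothesis $\underline{A}_L(\mathcal{D}) \le A_L(\mathcal{S}^k) \le \bar{A}_L(\mathcal{D})$, imposed for every $\mathcal{S}^k \in \mathcal{D}$, and observe that it confines the cardinality of every aggregate that can appear to the interval $\brackets*{\underline{A}_L(\mathcal{D}),\, \bar{A}_L(\mathcal{D})}$. Consequently, no aggregate whose size $a_L$ lies outside this interval is ever formed, so none of the associated $\binom{N}{a_L}\brackets*{1+a_L(b-1)}^m$ optimality cuts can be generated. Every summand of the first sum in~\eqref{eq:dynaggcomplexity} indexed by such an $a_L$ therefore drops out, and that sum truncates to $\sum_{a_L = \underline{A}_L(\mathcal{D})}^{\bar{A}_L(\mathcal{D})} \binom{N}{a_L}\brackets*{1+a_L(b-1)}^m$.

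Next, I would show that the correction sum truncates over the same range, so that a partition contributes its discount $\stirling{N}{a_L}$ only when it is admissible under the size restriction; the summands indexed by the excluded sizes then vanish, leaving $\sum_{a_L = \underline{A}_L(\mathcal{D})}^{\bar{A}_L(\mathcal{D})} \stirling{N}{a_L}$. The additive constants $+2$ and $-A_0$ depend only on the initial partition and the termination bookkeeping, neither of which is altered by bounding the aggregate sizes, so they carry over verbatim. Collecting the two truncated sums with these constants yields exactly~\eqref{eq:boundeddynaggcomplexity}.

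The main obstacle is making the truncation of the correction term precise. The upper restriction $A_L(\mathcal{S}^k) \le \bar{A}_L(\mathcal{D})$ bounds the cardinality of every part of every admissible partition, but the lower restriction $A_L(\mathcal{S}^k) \ge \underline{A}_L(\mathcal{D})$ constrains only the largest part; the delicate step is therefore to confirm that the summands removed from each sum correspond precisely to aggregate sizes that a size-restricted scheme can never realize, and that the $\stirling{N}{a_L}$ bookkeeping stays aligned index for index with the cut count it discounts. Establishing this alignment, rather than any fresh combinatorial estimate, is the crux; once it holds, the corollary follows by deleting the corresponding summands from the proof of Theorem~\ref{thm:dynaggcomplexity}.
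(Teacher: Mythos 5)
Your proposal matches the paper's own treatment essentially verbatim: the paper gives no separate proof of Corollary~\ref{cor:boundeddynaggcomplexity}, justifying it only with the remark that limiting the aggregate sizes ``simply removes summands'' in~\eqref{eq:dynaggcomplexity}, which is exactly your truncation of both sums to the range $\brackets*{\underline{A}_L(\mathcal{D}), \bar{A}_L(\mathcal{D})}$ with the constants $+2$ and $-A_0$ carried over unchanged. The delicate point you flag is real --- the hypothesis $A_L(\mathcal{S}^k) \geq \underline{A}_L(\mathcal{D})$ constrains only the largest aggregate, and $\stirling{N}{a_L}$ indexes the number of blocks rather than their sizes, so the truncated correction sum does not exactly count the admissible partitioning schemes --- but the paper leaves this just as unaddressed as you do, so your proposal faithfully reproduces the paper's argument, including its looseness.
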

We can again recover the original worst-case results presented in~\cite{Birge1988}. The single-cut L-shaped algorithm corresponds to a dynamic aggregation scheme with $\underline{A}_L = \bar{A}_L = N$ and $A_0 = 1$, for which we obtain
\begin{equation*}
  2 + \binom{N}{N} \brackets*{1 + N(b-1)}^m - \stirling{N}{N} - 1 = \brackets*{1 + N(b-1)}^m
\end{equation*}
Likewise, the multi-cut L-shaped algorithm corresponds to a dynamic aggregation scheme with $\underline{A}_L = \bar{A}_L = 1$ and $A_0 = N$, for which we obtain
\begin{equation*}
  2 + \binom{N}{1} \brackets*{1+b-1}^m - \stirling{N}{1} - N = 1 + N(b^m-1).
\end{equation*}
As with static aggregation, we can improve the worst-case bound by decreasing the aggregation level of the partitioning schemes. In addition, we would expect performance improvements from any dynamic aggregation rule that limits the possible aggregate combinations.

\subsection{Practical complexity}
\label{sec:practical-complexity}

The practical performance of dynamic aggregation schemes could be much better than suggested by the worst-case bound~\eqref{eq:dynaggcomplexity}. Any form of cut aggregation generally improves scalability in a distributed setting. Both communication latency and load imbalance among the master node and worker nodes are reduced. This holds since fewer cuts are passed from workers and the master problem does not grow as fast. Therefore, if the average iteration complexity of an aggregated L-shaped algorithm is comparable to the average multi-cut complexity in the single-core setting, then wall-clock time to solution can be greatly reduced if the aggregated L-shaped is run in parallel on distributed memory. It is not a general rule as the aggregation overhead could outweigh the gains from aggregation.

\section{Dynamic aggregation schemes}
\label{sec:aggregation-schemes}

The theoretical results derived in the previous section hold for any dynamic partitioning scheme. However, a practical implementation requires rules for how the partitioning scheme $\mathcal{S}^k$ should be chosen each iteration $k$. In this section, we propose a variety of dynamic aggregation schemes that are viable to implement and utilize in an L-shaped algorithm. The schemes that we suggest are all heuristic. Advised by our theoretical results, the schemes are designed in way that allows us to control the aggregation size, the aggregation level, as well as the possible aggregate combinations. Our software framework \jlinl{StochasticPrograms.jl} contains a large collection of documented\footnote{\url{https://martinbiel.github.io/StochasticPrograms.jl/dev/}} aggregation schemes that are based on this design philosophy. Here, we only introduce the schemes that are included in the numerical experiments presented in the paper. Note, that for comparison we have also implemented partial cut aggregation in its original formulation~\eqref{eq:aggregatedls}.

\subsection{Dynamic aggregation}
\label{sec:dynamic-aggregation}

\sloppy
The first proposed aggregation scheme is \emph{dynamic aggregation}. This scheme uses a fixed-length partitioning $\mathcal{S}^k = \Set{\mathcal{S}_1^k,\dots,\mathcal{S}_A^k}$ where each aggregate $\mathcal{S}_a^k$ can vary over iterations. A new optimality cut is placed in one of the aggregates based on a predefined selection rule. If the selection rule determines the chosen aggregate to be full, then the aggregate is added to the master problem and is then emptied. After all scenarios have been considered, any remaining non-empty aggregate is added to the master problem.

\sloppy
A selection rule returns an aggregate index $1 \leq a \leq A$ based on the aggregates $\mathcal{S}_1,\dots,\mathcal{S}_A$ and the cut candidate. The rule also determines if the chosen aggregate $\mathcal{S}_a$ should be considered full and added to the master problem. We have implemented a collection of selection rules in \jlinl{StochasticPrograms.jl}. Below, we list the subset of selection rules that we have included in the numerical experiments.

\textbf{SelectUniform}: Selects aggregates so that $|\mathcal{S}_a| = T, a = 1,\dots,A$ for some predefined $T$, with $TA \geq N$. This rule replicates partial cut aggregation, using formulation~\eqref{eq:dynamicls} instead of formulation~\eqref{eq:aggregatedls}. If $N$ is not divisible by $T$, then the final aggregate in the partition will consist of fewer than $T$ cuts. The worst-case bound for static aggregation~\eqref{eq:aggbound} is recovered for this rule.

\textbf{SelectClosest}: Selects the aggregate that is currently closest to the considered cut. Closeness is measured by a predefined distance function. We propose a set of distance functions in~\ref{sec:distance-measures}. If all aggregates are empty or no aggregate is close to the cut candidate within some relative tolerance $\tau$, then the cut candidate is placed in the next available empty aggregate. The aggregation level will depend on the chosen distance tolerance and chosen distance measure. In general, the number of possible aggregate combinations can be decreased by lowering $\tau$.

\subsection{Cluster aggregation}
\label{sec:cluster-aggregation}

\sloppy
The second proposed aggregation scheme is \emph{cluster aggregation}. The idea is to keep all new cuts in a buffer each iteration and aggregate only when all information is available. In this way, it could be possible to determine a more effective aggregation, albeit at the cost of larger overhead. A predefined cluster rule sorts the buffered cuts into a set of partitions $\mathcal{S}^k = \Set{\mathcal{S}_1^k,\dots,\mathcal{S}_{A_k}^k}$. As for dynamic aggregation, \jlinl{StochasticPrograms.jl} includes a collection of clustering rules, but we only introduce the rule used in the numerical experiments.

\sloppy
\textbf{K-medoids}: Sorts the cuts using k-medoids clustering~\cite{kmedoids}. K-medoids is an extension of the k-means algorithm for generalized distances. We cannot put precise bounds on the aggregation level because this will depend on the results of the k-medoid algorithm. Indirectly, the resulting clusters depend on the distance measure used. We expect that increasing $k$ will both decrease the aggregation level and reduce the possible aggregate combinations.

\subsection{Granulated aggregation}
\label{sec:gran-aggr-}

Finally, we propose the scheme \emph{granulated aggregation} with the aim to improve the scalability of the dynamic approaches. An apparent drawback with our aggregation formulation is that all $N$ master variables are kept disaggregate independent of the aggregation scheme. Consequently, any performance improvements from dynamic aggregation might be lost as $N$ increases due to numerical instability and the memory requirement of the master columns. To alleviate this, we combine static and dynamic aggregation. The idea is to fix an initial static aggregation scheme $\mathcal{S}$ and fix the master variables to $\theta_a, \, a = 1,\dots,A(\mathcal{S})$ according to this scheme throughout the procedure. In other words, we apply a dynamic aggregation procedure, but with fewer master variables. In this way, we can employ our novel aggregation schemes on large-scale problems without scalability issues. Furthermore, the initial partitioning naturally limits the number of possible aggregate combinations which improves the worst-case bound. An immediate drawback is that efficient partitionings that could have otherwise been identified by a dynamic scheme is missed due to the initial partitioning. However, our static worst-case bound~\eqref{eq:aggbound}, as well as our experimental results presented in the next section, indicate that this may be negligible as the L-shaped procedure progresses. The initial partitioning is arbitrary and any of the dynamic or clustering based schemes we have devised can be used in this granulated approach. For simplicity, we use a uniform scheme for the initial partitioning and leave testing other possibilities as future work.

\section{Numerical experiments}
\label{sec:numer-exper}

We benchmark the various aggregation schemes on a collection of applied problems. Our software framework supports reading stochastic program descriptions in the SMPS format~\cite{smps}. Moreover, we can sample stochastic programming instances of arbitrary scenario size using the loaded description. We use a testset which was presented and made openly available\footnote{\url{http://pages.cs.wisc.edu/~swright/stochastic/sampling/}} by the authors of~\cite{saacomp}. The authors use a sample average approximation algorithm to determine tight confidence intervals around the optimal values of the test problems. Reasonably tight confidence intervals are obtained for all test problems using a sample size of $\num{5000}$. We use this as a baseline when running the experiments. The problems \texttt{LandS} and \texttt{gbd} have relatively small first and second stage problems. As a result, they are solved fast by most methods. In order to include these two problems in the experiments and still obtain useful measurements we use sampled instances of $\num{100000}$ scenarios. This also allows us to explore the hypothesized effects on dynamic aggregation performance from using a large number of scenarios. In addition to the testset, we also consider an energy bidding problem that we have studied previously~\cite{dayahead}. The so called \texttt{dayahead} problem is formulated to to determine optimal order strategies on the Nordic day-ahead market from the perspective of a price-taking hydropower producer. The problem dimensions of \texttt{dayahead} are too large to sample $\num{5000}$ scenarios in our hardware setup. We known from experience that a reasonably tight confidence interval is obtained by sampling $\num{1000}$ scenarios, which is what we use in the experiments. We provide a summary of the testset problems and their respective problem dimensions in Table~\ref{tab:test_problems}.

\begin{table}[htbp]
  \centering
  \begin{tabular}{|clccc|}
    \hline
    \textbf{Name} & \textbf{Application} & \textbf{First-stage size}& \textbf{Second-stage size} & \textbf{Sample size}\\
    \hline
    \texttt{LandS} & Electricity planning & $(2,4)$ & $(7,12)$ & $\num{100000}$\\
    \texttt{gbd} & Aircraft allocation & $(4,17)$ & $(5,10)$ & $\num{100000}$ \\
    \texttt{20term} & Vehicle assignment & $(3,64)$ & $(124,764)$ & $\num{5000}$ \\
    \texttt{ssn} & Telecom network design & $(1,89)$ & $(175,706)$ & $\num{5000}$ \\
    \texttt{storm} & Cargo flight scheduling & $(185,121)$ & $(528,1259)$ & $\num{5000}$ \\
    \texttt{dayahead} & Energy market bidding & $(1457,1433)$ & $(2509,1909)$ & $\num{1000}$ \\
    \hline
  \end{tabular}
  \caption{Testset description.}
  \label{tab:test_problems}
\end{table}

The experiments are performed in a multi-node setup. The master node is a laptop computer with a $\num{2.6}$ GHz Intel Core i7 processor and $\num{16}$ GB of RAM. We spawn workers on a remote multi-core machine with two $\num{3.1}$ GHz Intel Xeon processors (total $\num{32}$ cores) and $\num{128}$ GB of RAM. The two machines were $\num{30}$ kilometers apart at the time of the experiments so communication latency is not negligible. Throughout, the Gurobi optimizer~\cite{gurobi} is used to solve emerging subproblems. In every experiment, the problem instance is solved to a relative tolerance of $\num{e-2}$.

\subsection{Empirical complexity}
\label{sec:empirical-complexity}

We introduce three entities for measuring empirical performance of an L-shaped algorithm. Consider the following definitions.
\begin{definition}
  The \emph{empirical iteration complexity}, denoted by $N^{\tau}_I$, is the number of iterations required for a given L-shaped algorithm to converge to an optimal solution of a given problem, within some relative tolerance $\tau$.
\end{definition}
\begin{definition}
  The \emph{empirical cut complexity}, denoted by $N^{\tau}_C$, is the number of optimality cuts in the master problem of after a given L-shaped algorithm has converged to the optimal solution of a given problem, within some relative tolerance $\tau$.
\end{definition}
\begin{definition}
  The \emph{empirical time complexity}, denoted by $N^{\tau}_T$, is the wall-clock time required for a given L-shaped algorithm to convergence to an optimal solution of a given problem, within some relative tolerance $\tau$.
\end{definition}
If an aggregated L-shaped algorithm has comparable empirical iteration complexity with that of the multi-cut L-shaped algorithm, but smaller empirical cut complexity, it is expected to perform better in a distributed setting. For most problems, we would expect a trade-off between these quantities. The worst-case bounds indicate that coarse aggregation schemes, with fewer cuts, require more iterations to converge. Likewise, fine aggregation schemes yield more cuts but fewer iterations to converge. The empirical complexities will not map directly to wall-clock time to solution, but we will show that low empirical cut complexity is a good indicator for when aggregation can yield better performance. These entities will be used to present our numerical results and more easily reason about them.

\subsection{Small-scale experiments}
\label{sec:small-scale-exper}

Most of the proposed aggregation schemes have a set of tunable parameters, and the optimal parameter values are not known for a given problem instance. Therefore, we first conduct a small experiment to explore the influence of the parameters on the performance of the algorithm. To that end, we consider the \texttt{20term} problem with $N = \num{1000}$ sampled scenarios. In the interest of time, the small-scale experiment is conducted using all $\num{32}$ worker cores. We run experiments for all suggested aggregation schemes, varying their respective parameters between the extremes. We measure all empirical complexities introduced in the previous section and report their change as aggregation parameters are varied. To make comparisons easier, we rescale the results and report relative complexities. The empirical cut complexity is presented relative to the cut complexity of multi-cut L-shaped, which is expected to yield the largest number of cuts. In contrast, empirical iteration- and time complexity are presented relative to full cut aggregation, which is expected to require the most iterations to converge.

First, we vary the aggregation level $T$ of a classical partial cut aggregation method between $1$ and $N/32$, which is the maximum aggregation possible on $\num{32}$ workers. The results are shown in Fig.~\ref{fig:20term_partial}. There is an apparent trade-off between iteration complexity and cut complexity as $T$ is varied, where the end points effectively yield multi-cut and single-cut L-shaped. This is supported by the worst-case bound~\eqref{eq:aggbound} as iteration complexity is expected to increase with coarser aggregation. Next, we perform the same test but for a dynamic aggregation schemed under the \textbf{SelectUniform} rule. By construction, this method should produce the same iterates as the standard partial approach. However, the results shown in Fig.~\ref{fig:20term_uniform} indicate that this does not hold. When we observe the procedures in detail we notice that the first few iterates are identical, but it appears that round-off errors in the solutions cause them to eventually diverge. The overall behaviour is however similar, and neither of the schemes is the most performant for all parameter choices. The results agree with prior works in that the best performance is achieved for an aggregation level between multi-cut and single-cut. In addition, the optimal aggregation level $T$ is hard to guess a priori.

\begin{figure}
  \centering
  \resizebox{0.9\textwidth}{!}{
  \begin{tikzpicture}[]
    \begin{axis}[height = {53.269444444444446mm}, legend pos = {north west}, ylabel = {Relative complexity}, title = {\texttt{20term} -Partial cut aggregation}, xmin = {1}, xmax = {32}, unbounded coords=jump,scaled x ticks = false, xlabel = {$T$}, xlabel style = {font = {\fontsize{14 pt}{14.3 pt}\selectfont}, rotate = 0.0},xmajorgrids = true, xtick align = inside,xticklabel style = {font = {\fontsize{12 pt}{10.4 pt}\selectfont}, rotate = 0.0},x grid style = {color = kth-lightgray,
        line width = 0.25,
        solid},axis x line* = left,x axis line style = {line width = 1,
        solid},scaled y ticks = false,ylabel style = {font = {\fontsize{14 pt}{14.3 pt}\selectfont}, rotate = 0.0},ymajorgrids = true,ytick align = inside,yticklabel style = {font = {\fontsize{12 pt}{10.4 pt}\selectfont}, rotate = 0.0},y grid style = {color = kth-lightgray,
        line width = 0.25,
        solid},axis y line* = left,y axis line style = {line width = 1,
        solid},    xshift = 0.0mm,
      yshift = 0mm,
      ,title style = {font = {\fontsize{14 pt}{18.2 pt}\selectfont}, rotate = 0.0},legend style = {line width = 1, xshift = 14mm,
        solid,font = {\fontsize{10 pt}{10.4 pt}\selectfont}},colorbar style={title=}, ymin = {0}, width = {152.4mm}]\addplot+[draw=none, color = kth-blue,
      line width = 0,
      solid,mark = square*,
      mark size = 2.0,
      mark options = {
        color = black,
        fill = kth-blue,
        line width = 1,
        rotate = 0,
        solid
      }] coordinates {
        (1.0, 1.0)
        (3.0, 0.6616541353383458)
        (4.0, 0.6332631578947369)
        (6.0, 0.5081503759398496)
        (7.0, 0.48)
        (9.0, 0.5139248120300752)
        (10.0, 0.36186466165413533)
        (12.0, 0.3630676691729323)
        (14.0, 0.3868872180451128)
        (15.0, 0.40926315789473683)
        (17.0, 0.3363609022556391)
        (18.0, 0.29786466165413533)
        (20.0, 0.22039097744360903)
        (22.0, 0.2911278195488722)
        (23.0, 0.3195187969924812)
        (25.0, 0.26177443609022555)
        (26.0, 0.20018045112781954)
        (28.0, 0.315187969924812)
        (29.0, 0.36138345864661653)
        (32.0, 0.3238496240601504)
      };
      \addlegendentry{$N^{0.01}_C$}
      \addplot+ [color = kth-blue,
      line width = 1,
      solid,mark = none,
      mark size = 2.0,
      mark options = {
        color = black,
        fill = kth-blue,
        line width = 1,
        rotate = 0,
        solid
      },forget plot] coordinates {
        (1.0, 1.0)
        (3.0, 0.6616541353383458)
        (4.0, 0.6332631578947369)
        (6.0, 0.5081503759398496)
        (7.0, 0.48)
        (9.0, 0.5139248120300752)
        (10.0, 0.36186466165413533)
        (12.0, 0.3630676691729323)
        (14.0, 0.3868872180451128)
        (15.0, 0.40926315789473683)
        (17.0, 0.3363609022556391)
        (18.0, 0.29786466165413533)
        (20.0, 0.22039097744360903)
        (22.0, 0.2911278195488722)
        (23.0, 0.3195187969924812)
        (25.0, 0.26177443609022555)
        (26.0, 0.20018045112781954)
        (28.0, 0.315187969924812)
        (29.0, 0.36138345864661653)
        (32.0, 0.3238496240601504)
      };
      \addplot+[draw=none, color = kth-red,
      line width = 0,
      solid,mark = triangle*,
      mark size = 3.0,
      mark options = {
        color = black,
        fill = kth-red,
        line width = 1,
        rotate = 0,
        solid
      }] coordinates {
        (1.0, 0.10007412898443291)
        (3.0, 0.18680504077094143)
        (4.0, 0.2453669384729429)
        (6.0, 0.26241660489251295)
        (7.0, 0.2972572275759822)
        (9.0, 0.3973313565604151)
        (10.0, 0.28020756115641215)
        (12.0, 0.374351371386212)
        (14.0, 0.3988139362490734)
        (15.0, 0.4217939214232765)
        (17.0, 0.519644180874722)
        (18.0, 0.4603409933283914)
        (20.0, 0.34099332839140106)
        (22.0, 0.44996293550778355)
        (23.0, 0.4936990363232024)
        (25.0, 0.4047442550037065)
        (26.0, 0.30985915492957744)
        (28.0, 0.4870274277242402)
        (29.0, 0.5581912527798369)
        (32.0, 1.0)
      };
      \addlegendentry{$N^{0.01}_I$}
      \addplot+ [color = kth-red,
      line width = 1,
      solid,mark = none,
      mark size = 2.0,
      mark options = {
        color = black,
        fill = kth-red,
        line width = 1,
        rotate = 0,
        solid
      },forget plot] coordinates {
        (1.0, 0.10007412898443291)
        (3.0, 0.18680504077094143)
        (4.0, 0.2453669384729429)
        (6.0, 0.26241660489251295)
        (7.0, 0.2972572275759822)
        (9.0, 0.3973313565604151)
        (10.0, 0.28020756115641215)
        (12.0, 0.374351371386212)
        (14.0, 0.3988139362490734)
        (15.0, 0.4217939214232765)
        (17.0, 0.519644180874722)
        (18.0, 0.4603409933283914)
        (20.0, 0.34099332839140106)
        (22.0, 0.44996293550778355)
        (23.0, 0.4936990363232024)
        (25.0, 0.4047442550037065)
        (26.0, 0.30985915492957744)
        (28.0, 0.4870274277242402)
        (29.0, 0.5581912527798369)
        (32.0, 1.0)
      };
      \addplot+[draw=none, color = kth-green,
      line width = 0,
      solid,mark = *,
      mark size = 2.0,
      mark options = {
        color = black,
        fill = kth-green,
        line width = 1,
        rotate = 0,
        solid
      }] coordinates {
        (1.0, 0.44435591204145386)
        (3.0, 0.3583042746546458)
        (4.0, 0.3990451283916541)
        (6.0, 0.37182292382949683)
        (7.0, 0.39252292057290605)
        (9.0, 0.5029081028438036)
        (10.0, 0.32689386088615174)
        (12.0, 0.4576888104664997)
        (14.0, 0.4737832251825845)
        (15.0, 0.49786998054324766)
        (17.0, 0.5861459256637835)
        (18.0, 0.4967728129227612)
        (20.0, 0.31623402221948604)
        (22.0, 0.45433191948053436)
        (23.0, 0.5920656564954748)
        (25.0, 0.48312751804541487)
        (26.0, 0.2960487931366824)
        (28.0, 0.5609493166188755)
        (29.0, 0.7055031362661132)
        (32.0, 1.0)
      };
      \addlegendentry{$N^{0.01}_T$}
      \addplot+ [color = kth-green,
      line width = 1,
      solid,mark = none,
      mark size = 2.0,
      mark options = {
        color = black,
        fill = kth-green,
        line width = 1,
        rotate = 0,
        solid
      },forget plot] coordinates {
        (1.0, 0.44435591204145386)
        (3.0, 0.3583042746546458)
        (4.0, 0.3990451283916541)
        (6.0, 0.37182292382949683)
        (7.0, 0.39252292057290605)
        (9.0, 0.5029081028438036)
        (10.0, 0.32689386088615174)
        (12.0, 0.4576888104664997)
        (14.0, 0.4737832251825845)
        (15.0, 0.49786998054324766)
        (17.0, 0.5861459256637835)
        (18.0, 0.4967728129227612)
        (20.0, 0.31623402221948604)
        (22.0, 0.45433191948053436)
        (23.0, 0.5920656564954748)
        (25.0, 0.48312751804541487)
        (26.0, 0.2960487931366824)
        (28.0, 0.5609493166188755)
        (29.0, 0.7055031362661132)
        (32.0, 1.0)
      };
    \end{axis}
  \end{tikzpicture}
}
  \caption{Relative empirical complexities when solving \texttt{20term} with $\num{1000}$ scenarios using a uniform partial aggregation scheme, as a function of the aggregation level $T$.}
  \label{fig:20term_partial}
\end{figure}
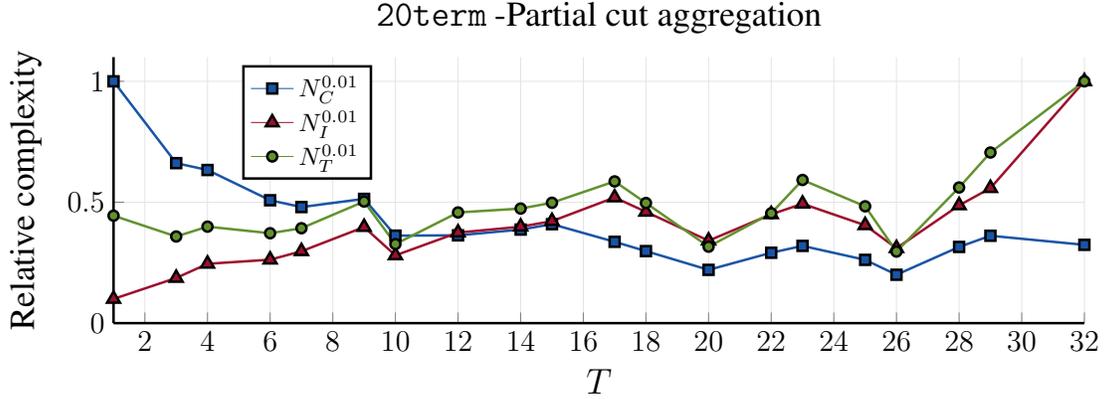

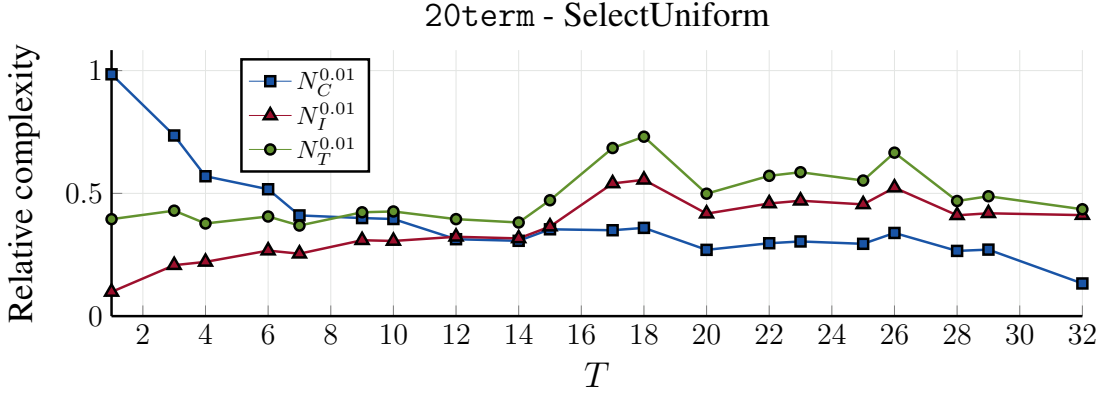
\begin{figure}
  \centering
  \resizebox{0.9\textwidth}{!}{
  \begin{tikzpicture}[]
    \begin{axis}[height = {53.269444444444446mm}, legend pos = {north west}, ylabel = {Relative complexity}, title = {\texttt{20term} - SelectUniform}, xmin = {1}, xmax = {32}, unbounded coords=jump,scaled x ticks = false, xlabel = {$T$}, xlabel style = {font = {\fontsize{14 pt}{14.3 pt}\selectfont}, rotate = 0.0},xmajorgrids = true, xtick align = inside,xticklabel style = {font = {\fontsize{12 pt}{10.4 pt}\selectfont}, rotate = 0.0},x grid style = {color = kth-lightgray,
        line width = 0.25,
        solid},axis x line* = left,x axis line style = {line width = 1,
        solid},scaled y ticks = false,ylabel style = {font = {\fontsize{14 pt}{14.3 pt}\selectfont}, rotate = 0.0},ymajorgrids = true,ytick align = inside,yticklabel style = {font = {\fontsize{12 pt}{10.4 pt}\selectfont}, rotate = 0.0},y grid style = {color = kth-lightgray,
        line width = 0.25,
        solid},axis y line* = left,y axis line style = {line width = 1,
        solid},    xshift = 0.0mm,
      yshift = 0mm,
      ,title style = {font = {\fontsize{14 pt}{18.2 pt}\selectfont}, rotate = 0.0},legend style = {line width = 1, xshift = 14mm,
        solid,font = {\fontsize{10 pt}{10.4 pt}\selectfont}},colorbar style={title=}, ymin = {0}, width = {152.4mm}]\addplot+[draw=none, color = kth-blue,
      line width = 0,
      solid,mark = square*,
      mark size = 2.0,
      mark options = {
        color = black,
        fill = kth-blue,
        line width = 1,
        rotate = 0,
        solid
      }] coordinates {
        (1.0, 0.9849624060150376)
        (3.0, 0.7357593984962406)
        (4.0, 0.5697443609022557)
        (6.0, 0.516812030075188)
        (7.0, 0.41022556390977444)
        (9.0, 0.39939849624060153)
        (10.0, 0.39554887218045115)
        (12.0, 0.31326315789473685)
        (14.0, 0.3067669172932331)
        (15.0, 0.35368421052631577)
        (17.0, 0.3498345864661654)
        (18.0, 0.35945864661654137)
        (20.0, 0.2699548872180451)
        (22.0, 0.2969022556390977)
        (23.0, 0.3041203007518797)
        (25.0, 0.29449624060150376)
        (26.0, 0.3387669172932331)
        (28.0, 0.26562406015037593)
        (29.0, 0.2709172932330827)
        (32.0, 0.13305263157894737)
      };
      \addlegendentry{$N^{0.01}_C$}
      \addplot+ [color = kth-blue,
      line width = 1,
      solid,mark = none,
      mark size = 2.0,
      mark options = {
        color = black,
        fill = kth-blue,
        line width = 1,
        rotate = 0,
        solid
      },forget plot] coordinates {
        (1.0, 0.9849624060150376)
        (3.0, 0.7357593984962406)
        (4.0, 0.5697443609022557)
        (6.0, 0.516812030075188)
        (7.0, 0.41022556390977444)
        (9.0, 0.39939849624060153)
        (10.0, 0.39554887218045115)
        (12.0, 0.31326315789473685)
        (14.0, 0.3067669172932331)
        (15.0, 0.35368421052631577)
        (17.0, 0.3498345864661654)
        (18.0, 0.35945864661654137)
        (20.0, 0.2699548872180451)
        (22.0, 0.2969022556390977)
        (23.0, 0.3041203007518797)
        (25.0, 0.29449624060150376)
        (26.0, 0.3387669172932331)
        (28.0, 0.26562406015037593)
        (29.0, 0.2709172932330827)
        (32.0, 0.13305263157894737)
      };
      \addplot+[draw=none, color = kth-red,
      line width = 0,
      solid,mark = triangle*,
      mark size = 3.0,
      mark options = {
        color = black,
        fill = kth-red,
        line width = 1,
        rotate = 0,
        solid
      }] coordinates {
        (1.0, 0.09859154929577464)
        (3.0, 0.20756115641215717)
        (4.0, 0.22090437361008153)
        (6.0, 0.2668643439584878)
        (7.0, 0.2542624166048925)
        (9.0, 0.30911786508524836)
        (10.0, 0.3061527057079318)
        (12.0, 0.32320237212750186)
        (14.0, 0.31653076352853965)
        (15.0, 0.3647146034099333)
        (17.0, 0.5404002965159377)
        (18.0, 0.5552260934025204)
        (20.0, 0.4173461823573017)
        (22.0, 0.4588584136397331)
        (23.0, 0.4699777613046701)
        (25.0, 0.4551519644180875)
        (26.0, 0.5233506300963677)
        (28.0, 0.4106745737583395)
        (29.0, 0.41882876204596)
        (32.0, 0.41141586360266863)
      };
      \addlegendentry{$N^{0.01}_I$}
      \addplot+ [color = kth-red,
      line width = 1,
      solid,mark = none,
      mark size = 2.0,
      mark options = {
        color = black,
        fill = kth-red,
        line width = 1,
        rotate = 0,
        solid
      },forget plot] coordinates {
        (1.0, 0.09859154929577464)
        (3.0, 0.20756115641215717)
        (4.0, 0.22090437361008153)
        (6.0, 0.2668643439584878)
        (7.0, 0.2542624166048925)
        (9.0, 0.30911786508524836)
        (10.0, 0.3061527057079318)
        (12.0, 0.32320237212750186)
        (14.0, 0.31653076352853965)
        (15.0, 0.3647146034099333)
        (17.0, 0.5404002965159377)
        (18.0, 0.5552260934025204)
        (20.0, 0.4173461823573017)
        (22.0, 0.4588584136397331)
        (23.0, 0.4699777613046701)
        (25.0, 0.4551519644180875)
        (26.0, 0.5233506300963677)
        (28.0, 0.4106745737583395)
        (29.0, 0.41882876204596)
        (32.0, 0.41141586360266863)
      };
      \addplot+[draw=none, color = kth-green,
      line width = 0,
      solid,mark = *,
      mark size = 2.0,
      mark options = {
        color = black,
        fill = kth-green,
        line width = 1,
        rotate = 0,
        solid
      }] coordinates {
        (1.0, 0.3949426867866125)
        (3.0, 0.42951542386510444)
        (4.0, 0.37740119498473707)
        (6.0, 0.4057778416436239)
        (7.0, 0.36894525714265897)
        (9.0, 0.42296698651258147)
        (10.0, 0.42629050634029536)
        (12.0, 0.39505131675011024)
        (14.0, 0.381354575493602)
        (15.0, 0.47199555177287966)
        (17.0, 0.6846567784430215)
        (18.0, 0.7308034304670608)
        (20.0, 0.4986681608828746)
        (22.0, 0.5714835338123356)
        (23.0, 0.5861152912876026)
        (25.0, 0.5522703887298046)
        (26.0, 0.6658001848789071)
        (28.0, 0.4688912312430288)
        (29.0, 0.48825334762623074)
        (32.0, 0.4352703023730862)
      };
      \addlegendentry{$N^{0.01}_T$}
      \addplot+ [color = kth-green,
      line width = 1,
      solid,mark = none,
      mark size = 2.0,
      mark options = {
        color = black,
        fill = kth-green,
        line width = 1,
        rotate = 0,
        solid
      },forget plot] coordinates {
        (1.0, 0.3949426867866125)
        (3.0, 0.42951542386510444)
        (4.0, 0.37740119498473707)
        (6.0, 0.4057778416436239)
        (7.0, 0.36894525714265897)
        (9.0, 0.42296698651258147)
        (10.0, 0.42629050634029536)
        (12.0, 0.39505131675011024)
        (14.0, 0.381354575493602)
        (15.0, 0.47199555177287966)
        (17.0, 0.6846567784430215)
        (18.0, 0.7308034304670608)
        (20.0, 0.4986681608828746)
        (22.0, 0.5714835338123356)
        (23.0, 0.5861152912876026)
        (25.0, 0.5522703887298046)
        (26.0, 0.6658001848789071)
        (28.0, 0.4688912312430288)
        (29.0, 0.48825334762623074)
        (32.0, 0.4352703023730862)
      };
    \end{axis}
  \end{tikzpicture}
}
  \caption{Relative empirical complexities when solving \texttt{20term} with $\num{1000}$ scenarios using dynamic aggregation under the \textbf{SelectUniform} rule, as a function of the parameter $T$.}
  \label{fig:20term_uniform}
\end{figure}

Next, we test the distance based aggregation schemes. We first consider dynamic aggregation under the \textbf{SelectClosest} rule, where the distance tolerance parameter $\tau$ is varied. From our experience, the angular distance measure introduced in~\ref{sec:distance-measures} is most reliably performant for all problems. Therefore, we only present the results from using this distance measure to keep the paper concise. The results are shown in Fig.~\ref{fig:20term_closest}. The same trade-off behaviour between cut- and iteration complexity can be observed as $\tau$ is increased. This is expected as the aggregation level should increase as the distance threshold for aggregation is increased. However, the time to converge is more parameter sensitive for this method. For come values, the \textbf{SelectClosest} scheme is even outperformed by full aggregation. Because the iteration complexity does not exhibit the same peaks, this increase in computation time is attributed to overhead in the procedure. The results from using cluster aggregation under the \textbf{Kmedoids} rule, shown in Fig.~\ref{fig:20term_kmedoids}, are more promising. The complexity trade-off is inversed for this scheme because the aggregation level increases as the number of possible clusters $k$ is increased. Any overhead from the k-medoids clustering calculations appears negligible. Even though some configurations of the partial cut approach converge faster the clustering approach is consistently efficient over a larger range of parameter values, indicating that the \textbf{Kmedoids} method could be easier to tune.

\begin{figure}
  \centering
  \resizebox{0.9\textwidth}{!}{
  \begin{tikzpicture}[]
    \begin{axis}[height = {53.269444444444446mm}, legend pos = {north west}, ylabel = {Relative complexity}, title = {\texttt{20term} - SelectClosest}, xmin = {0}, xmax = {1}, unbounded coords=jump,scaled x ticks = false, xlabel = {$\tau$}, xlabel style = {font = {\fontsize{14 pt}{14.3 pt}\selectfont}, rotate = 0.0},xmajorgrids = true, xtick align = inside,xticklabel style = {font = {\fontsize{12 pt}{10.4 pt}\selectfont}, rotate = 0.0},x grid style = {color = kth-lightgray,
        line width = 0.25,
        solid},axis x line* = left,x axis line style = {line width = 1,
        solid},scaled y ticks = false,ylabel style = {font = {\fontsize{14 pt}{14.3 pt}\selectfont}, rotate = 0.0},ymajorgrids = true,ytick align = inside,yticklabel style = {font = {\fontsize{12 pt}{10.4 pt}\selectfont}, rotate = 0.0},y grid style = {color = kth-lightgray,
        line width = 0.25,
        solid},axis y line* = left,y axis line style = {line width = 1,
        solid},    xshift = 0.0mm,
      yshift = 0mm,
      ,title style = {font = {\fontsize{14 pt}{18.2 pt}\selectfont}, rotate = 0.0},legend style = {line width = 1, xshift = 14mm,
        solid,font = {\fontsize{10 pt}{10.4 pt}\selectfont}},colorbar style={title=}, ymin = {0}, width = {152.4mm}]\addplot+[draw=none, color = kth-blue,
      line width = 0,
      solid,mark = square*,
      mark size = 2.0,
      mark options = {
        color = black,
        fill = kth-blue,
        line width = 1,
        rotate = 0,
        solid
      }] coordinates {
        (0.01, 0.7756315789473684)
        (0.06, 0.554796992481203)
        (0.11, 0.4658872180451128)
        (0.17, 0.3799924812030075)
        (0.22, 0.3045488721804511)
        (0.27, 0.24172932330827068)
        (0.32, 0.2583533834586466)
        (0.37, 0.22217293233082708)
        (0.43, 0.24366165413533836)
        (0.48, 0.23757142857142857)
        (0.53, 0.2214436090225564)
        (0.58, 0.2005187969924812)
        (0.64, 0.1519172932330827)
        (0.69, 0.22613533834586466)
        (0.74, 0.2154360902255639)
        (0.79, 0.16294736842105262)
        (0.84, 0.2036015037593985)
        (0.9, 0.1952481203007519)
        (0.95, 0.23075187969924812)
        (1.0, 0.221593984962406)
      };
      \addlegendentry{$N^{0.01}_C$}
      \addplot+ [color = kth-blue,
      line width = 1,
      solid,mark = none,
      mark size = 2.0,
      mark options = {
        color = black,
        fill = kth-blue,
        line width = 1,
        rotate = 0,
        solid
      },forget plot] coordinates {
        (0.01, 0.7756315789473684)
        (0.06, 0.554796992481203)
        (0.11, 0.4658872180451128)
        (0.17, 0.3799924812030075)
        (0.22, 0.3045488721804511)
        (0.27, 0.24172932330827068)
        (0.32, 0.2583533834586466)
        (0.37, 0.22217293233082708)
        (0.43, 0.24366165413533836)
        (0.48, 0.23757142857142857)
        (0.53, 0.2214436090225564)
        (0.58, 0.2005187969924812)
        (0.64, 0.1519172932330827)
        (0.69, 0.22613533834586466)
        (0.74, 0.2154360902255639)
        (0.79, 0.16294736842105262)
        (0.84, 0.2036015037593985)
        (0.9, 0.1952481203007519)
        (0.95, 0.23075187969924812)
        (1.0, 0.221593984962406)
      };
      \addplot+[draw=none, color = kth-red,
      line width = 0,
      solid,mark = triangle*,
      mark size = 3.0,
      mark options = {
        color = black,
        fill = kth-red,
        line width = 1,
        rotate = 0,
        solid
      }] coordinates {
        (0.01, 0.11045218680504076)
        (0.06, 0.15344699777613047)
        (0.11, 0.18680504077094143)
        (0.17, 0.2453669384729429)
        (0.22, 0.2824314306893996)
        (0.27, 0.312824314306894)
        (0.32, 0.3558191252779837)
        (0.37, 0.36249073387694586)
        (0.43, 0.4870274277242402)
        (0.48, 0.5107487027427724)
        (0.53, 0.5255744996293551)
        (0.58, 0.48851000741289846)
        (0.64, 0.346182357301705)
        (0.69, 0.6041512231282431)
        (0.74, 0.6204595997034841)
        (0.79, 0.45441067457375833)
        (0.84, 0.6212008895478132)
        (0.9, 0.6019273535952557)
        (0.95, 0.7086730911786508)
        (1.0, 0.6842105263157895)
      };
      \addlegendentry{$N^{0.01}_I$}
      \addplot+ [color = kth-red,
      line width = 1,
      solid,mark = none,
      mark size = 2.0,
      mark options = {
        color = black,
        fill = kth-red,
        line width = 1,
        rotate = 0,
        solid
      },forget plot] coordinates {
        (0.01, 0.11045218680504076)
        (0.06, 0.15344699777613047)
        (0.11, 0.18680504077094143)
        (0.17, 0.2453669384729429)
        (0.22, 0.2824314306893996)
        (0.27, 0.312824314306894)
        (0.32, 0.3558191252779837)
        (0.37, 0.36249073387694586)
        (0.43, 0.4870274277242402)
        (0.48, 0.5107487027427724)
        (0.53, 0.5255744996293551)
        (0.58, 0.48851000741289846)
        (0.64, 0.346182357301705)
        (0.69, 0.6041512231282431)
        (0.74, 0.6204595997034841)
        (0.79, 0.45441067457375833)
        (0.84, 0.6212008895478132)
        (0.9, 0.6019273535952557)
        (0.95, 0.7086730911786508)
        (1.0, 0.6842105263157895)
      };
      \addplot+[draw=none, color = kth-green,
      line width = 0,
      solid,mark = *,
      mark size = 2.0,
      mark options = {
        color = black,
        fill = kth-green,
        line width = 1,
        rotate = 0,
        solid
      }] coordinates {
        (0.01, 0.5263291315318477)
        (0.06, 0.3771438826521442)
        (0.11, 0.39217183940910616)
        (0.17, 0.4935569773027145)
        (0.22, 0.49077483719562187)
        (0.27, 0.551473472288401)
        (0.32, 0.6520651450843755)
        (0.37, 0.5941416478755174)
        (0.43, 0.9422909019066951)
        (0.48, 0.9624374003470579)
        (0.53, 0.8997680168809996)
        (0.58, 0.7597737531462853)
        (0.64, 0.49494520864017977)
        (0.69, 0.9975049333422207)
        (0.74, 0.8327587051002272)
        (0.79, 0.5637148444150496)
        (0.84, 0.7714465174601323)
        (0.9, 0.7575696699779957)
        (0.95, 0.9290954930753774)
        (1.0, 0.891362004797207)
      };
      \addlegendentry{$N^{0.01}_T$}
      \addplot+ [color = kth-green,
      line width = 1,
      solid,mark = none,
      mark size = 2.0,
      mark options = {
        color = black,
        fill = kth-green,
        line width = 1,
        rotate = 0,
        solid
      },forget plot] coordinates {
        (0.01, 0.5263291315318477)
        (0.06, 0.3771438826521442)
        (0.11, 0.39217183940910616)
        (0.17, 0.4935569773027145)
        (0.22, 0.49077483719562187)
        (0.27, 0.551473472288401)
        (0.32, 0.6520651450843755)
        (0.37, 0.5941416478755174)
        (0.43, 0.9422909019066951)
        (0.48, 0.9624374003470579)
        (0.53, 0.8997680168809996)
        (0.58, 0.7597737531462853)
        (0.64, 0.49494520864017977)
        (0.69, 0.9975049333422207)
        (0.74, 0.8327587051002272)
        (0.79, 0.5637148444150496)
        (0.84, 0.7714465174601323)
        (0.9, 0.7575696699779957)
        (0.95, 0.9290954930753774)
        (1.0, 0.891362004797207)
      };
    \end{axis}
  \end{tikzpicture}
}
  \caption{Relative empirical complexities when solving \texttt{20term} with $\num{1000}$ scenarios using dynamic aggregation under the \textbf{SelectClosest} rule, as a function of the parameter $\tau$. The angular distance measure is used throughout.}
  \label{fig:20term_closest}
\end{figure}
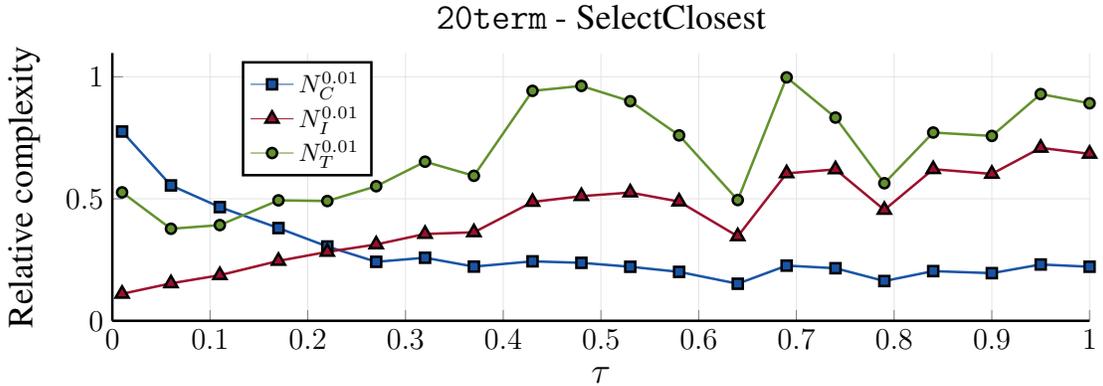

\begin{figure}
  \centering
  \resizebox{0.9\textwidth}{!}{
  \begin{tikzpicture}[]
    \begin{axis}[height = {53.269444444444446mm}, legend pos = {north west}, ylabel = {Relative complexity}, title = {\texttt{20term} - Kmedoids}, xmin = {0}, xmax = {32}, unbounded coords=jump,scaled x ticks = false, xlabel = {$k$}, xlabel style = {font = {\fontsize{14 pt}{14.3 pt}\selectfont}, rotate = 0.0},xmajorgrids = true, xtick align = inside,xticklabel style = {font = {\fontsize{12 pt}{10.4 pt}\selectfont}, rotate = 0.0},x grid style = {color = kth-lightgray,
        line width = 0.25,
        solid},axis x line* = left,x axis line style = {line width = 1,
        solid},scaled y ticks = false,ylabel style = {font = {\fontsize{14 pt}{14.3 pt}\selectfont}, rotate = 0.0},ymajorgrids = true,ytick align = inside,yticklabel style = {font = {\fontsize{12 pt}{10.4 pt}\selectfont}, rotate = 0.0},y grid style = {color = kth-lightgray,
        line width = 0.25,
        solid},axis y line* = left,y axis line style = {line width = 1,
        solid},    xshift = 0.0mm,
      yshift = 0mm,
      ymax = 1.4,
      ,title style = {font = {\fontsize{14 pt}{18.2 pt}\selectfont}, rotate = 0.0},legend style = {line width = 1,
        solid,font = {\fontsize{10 pt}{10.4 pt}\selectfont}},colorbar style={title=}, ymin = {0}, width = {152.4mm}]\addplot+[draw=none, color = kth-blue,
      line width = 0,
      solid,mark = square*,
      mark size = 2.0,
      mark options = {
        color = black,
        fill = kth-blue,
        line width = 1,
        rotate = 0,
        solid
      }] coordinates {
        (1.0, 0.18213533834586465)
        (3.0, 0.21109022556390977)
        (4.0, 0.27269172932330826)
        (6.0, 0.27473684210526317)
        (7.0, 0.34598496240601506)
        (9.0, 0.39246616541353385)
        (10.0, 0.43362406015037597)
        (12.0, 0.4706766917293233)
        (14.0, 0.5006992481203008)
        (15.0, 0.47741353383458646)
        (17.0, 0.5858345864661654)
        (18.0, 0.6364285714285715)
        (20.0, 0.6704962406015038)
        (22.0, 0.7616992481203008)
        (23.0, 0.7129398496240601)
        (25.0, 0.7538496240601503)
        (26.0, 0.816766917293233)
        (28.0, 0.8822932330827068)
        (29.0, 0.9595789473684211)
        (31.0, 0.8684436090225564)
      };
      \addlegendentry{$N^{0.01}_C$}
      \addplot+ [color = kth-blue,
      line width = 1,
      solid,mark = none,
      mark size = 2.0,
      mark options = {
        color = black,
        fill = kth-blue,
        line width = 1,
        rotate = 0,
        solid
      },forget plot] coordinates {
        (1.0, 0.18213533834586465)
        (3.0, 0.21109022556390977)
        (4.0, 0.27269172932330826)
        (6.0, 0.27473684210526317)
        (7.0, 0.34598496240601506)
        (9.0, 0.39246616541353385)
        (10.0, 0.43362406015037597)
        (12.0, 0.4706766917293233)
        (14.0, 0.5006992481203008)
        (15.0, 0.47741353383458646)
        (17.0, 0.5858345864661654)
        (18.0, 0.6364285714285715)
        (20.0, 0.6704962406015038)
        (22.0, 0.7616992481203008)
        (23.0, 0.7129398496240601)
        (25.0, 0.7538496240601503)
        (26.0, 0.816766917293233)
        (28.0, 0.8822932330827068)
        (29.0, 0.9595789473684211)
        (31.0, 0.8684436090225564)
      };
      \addplot+[draw=none, color = kth-red,
      line width = 0,
      solid,mark = triangle*,
      mark size = 3.0,
      mark options = {
        color = black,
        fill = kth-red,
        line width = 1,
        rotate = 0,
        solid
      }] coordinates {
        (1.0, 0.5626389918458117)
        (3.0, 0.2246108228317272)
        (4.0, 0.2149740548554485)
        (6.0, 0.14381022979985175)
        (7.0, 0.15492957746478872)
        (9.0, 0.13713862120088954)
        (10.0, 0.1363973313565604)
        (12.0, 0.12305411415863603)
        (14.0, 0.11267605633802817)
        (15.0, 0.10081541882876205)
        (17.0, 0.10822831727205337)
        (18.0, 0.1111934766493699)
        (20.0, 0.10526315789473684)
        (22.0, 0.11045218680504076)
        (23.0, 0.09933283914010378)
        (25.0, 0.10081541882876205)
        (26.0, 0.10378057820607858)
        (28.0, 0.10600444773906598)
        (29.0, 0.1156412157153447)
        (31.0, 0.08969607116382505)
      };
      \addlegendentry{$N^{0.01}_I$}
      \addplot+ [color = kth-red,
      line width = 1,
      solid,mark = none,
      mark size = 2.0,
      mark options = {
        color = black,
        fill = kth-red,
        line width = 1,
        rotate = 0,
        solid
      },forget plot] coordinates {
        (1.0, 0.5626389918458117)
        (3.0, 0.2246108228317272)
        (4.0, 0.2149740548554485)
        (6.0, 0.14381022979985175)
        (7.0, 0.15492957746478872)
        (9.0, 0.13713862120088954)
        (10.0, 0.1363973313565604)
        (12.0, 0.12305411415863603)
        (14.0, 0.11267605633802817)
        (15.0, 0.10081541882876205)
        (17.0, 0.10822831727205337)
        (18.0, 0.1111934766493699)
        (20.0, 0.10526315789473684)
        (22.0, 0.11045218680504076)
        (23.0, 0.09933283914010378)
        (25.0, 0.10081541882876205)
        (26.0, 0.10378057820607858)
        (28.0, 0.10600444773906598)
        (29.0, 0.1156412157153447)
        (31.0, 0.08969607116382505)
      };
      \addplot+[draw=none, color = kth-green,
      line width = 0,
      solid,mark = *,
      mark size = 2.0,
      mark options = {
        color = black,
        fill = kth-green,
        line width = 1,
        rotate = 0,
        solid
      }] coordinates {
        (1.0, 0.6131633399593703)
        (3.0, 0.5575894599461901)
        (4.0, 0.7029033523176512)
        (6.0, 0.5786625826856414)
        (7.0, 0.6783799252171973)
        (9.0, 0.6413728101412107)
        (10.0, 0.6348273621627823)
        (12.0, 0.5986707059956675)
        (14.0, 0.46232827511493507)
        (15.0, 0.3786385383446904)
        (17.0, 0.3644303173065756)
        (18.0, 0.40074484204275246)
        (20.0, 0.3437874124131079)
        (22.0, 0.6221616510744243)
        (23.0, 0.34986556293203785)
        (25.0, 0.3587549524057021)
        (26.0, 0.37217664982105536)
        (28.0, 0.4410468194142772)
        (29.0, 0.5008163661568644)
        (31.0, 0.4310929382184468)
      };
      \addlegendentry{$N^{0.01}_T$}
      \addplot+ [color = kth-green,
      line width = 1,
      solid,mark = none,
      mark size = 2.0,
      mark options = {
        color = black,
        fill = kth-green,
        line width = 1,
        rotate = 0,
        solid
      },forget plot] coordinates {
        (1.0, 0.6131633399593703)
        (3.0, 0.5575894599461901)
        (4.0, 0.7029033523176512)
        (6.0, 0.5786625826856414)
        (7.0, 0.6783799252171973)
        (9.0, 0.6413728101412107)
        (10.0, 0.6348273621627823)
        (12.0, 0.5986707059956675)
        (14.0, 0.46232827511493507)
        (15.0, 0.3786385383446904)
        (17.0, 0.3644303173065756)
        (18.0, 0.40074484204275246)
        (20.0, 0.3437874124131079)
        (22.0, 0.6221616510744243)
        (23.0, 0.34986556293203785)
        (25.0, 0.3587549524057021)
        (26.0, 0.37217664982105536)
        (28.0, 0.4410468194142772)
        (29.0, 0.5008163661568644)
        (31.0, 0.4310929382184468)
      };
    \end{axis}
  \end{tikzpicture}
}
  \caption{Relative empirical complexities when solving \texttt{20term} with $\num{1000}$ scenarios using cluster aggregation under the \textbf{Kmedoids} rule, as a function of the parameter $k$. The angular distance measure is used throughout.}
  \label{fig:20term_kmedoids}
\end{figure}
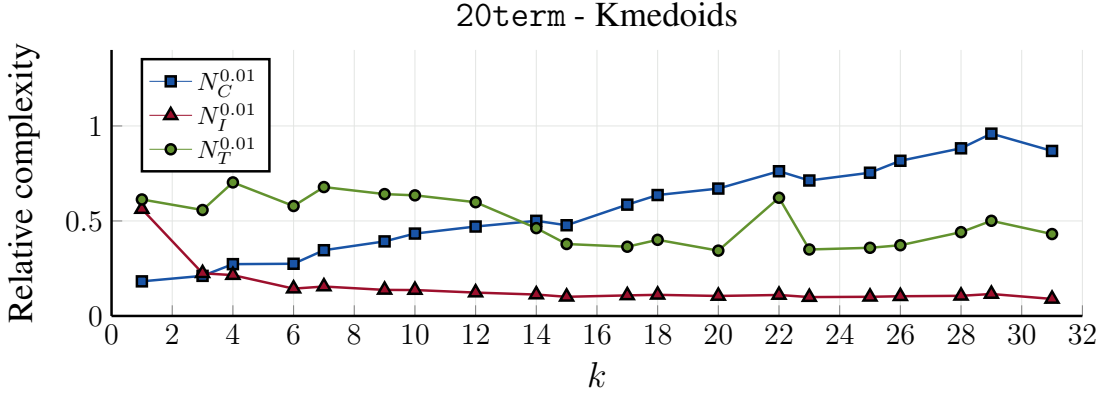

Finally, we test if the performance of the novel aggregation schemes can be improved using the suggested granulated approach. We use an initial static partitioning of size $\num{3}$. Each worker will then run dynamic aggregation schemes on about $\num{10}$ pre-granulated aggregates each. The result for the \textbf{SelectClosest} rule is shown in Fig.~\ref{fig:20term_granulated_closest}. The overhead apparent in the nominal implementation is successfully removed and the time complexity appears iteration bound instead. Overall performance is also improved compared to the non-granulated version, but it is still not competitive with partial cut aggregation. The results of running granulated \textbf{Kmedoids} cluster aggregation is shown in Fig.~\ref{fig:20term_granulated_kmedoids}. These results are more promising. Performance is again improved overall compared to not using pre-granulation. Moreover, this method outperforms all other aggregation schemes in this small-scale setting. Compared to the other aggregation schemes, both of the granulated methods show more consistent performance as the relevant aggregation parameter is varied. The number of possible aggregation combinations is naturally reduced by the pre-granulation, which makes the performance less sensitive to the parameter value. In addition, the \textbf{Kmedoids} strategy is also able to further improve performance from only using static aggregation. This is an indication that the granulated strategy is successful in combining the strengths of static and dynamic aggregation.

\begin{figure}
  \centering
  \resizebox{0.9\textwidth}{!}{
  \begin{tikzpicture}[]
    \begin{axis}[height = {53.269444444444446mm}, legend pos = {north west}, ylabel = {Complexity fraction}, title = {\texttt{20term} - GranulatedSelectClosest}, xmin = {0}, xmax = {1}, unbounded coords=jump,scaled x ticks = false, xlabel = {$\tau
        $}, xlabel style = {font = {\fontsize{14 pt}{14.3 pt}\selectfont}, rotate = 0.0},xmajorgrids = true, xtick align = inside,xticklabel style = {font = {\fontsize{12 pt}{10.4 pt}\selectfont}, rotate = 0.0},x grid style = {color = kth-lightgray,
        line width = 0.25,
        solid},axis x line* = left,x axis line style = {line width = 1,
        solid},scaled y ticks = false,ylabel style = {font = {\fontsize{14 pt}{14.3 pt}\selectfont}, rotate = 0.0},ymajorgrids = true,ytick align = inside,yticklabel style = {font = {\fontsize{12 pt}{10.4 pt}\selectfont}, rotate = 0.0},y grid style = {color = kth-lightgray,
        line width = 0.25,
        solid},axis y line* = left,y axis line style = {line width = 1,
        solid},    xshift = 0.0mm,
      yshift = 0mm,
      ymax = 1,
      ,title style = {font = {\fontsize{14 pt}{18.2 pt}\selectfont}, rotate = 0.0},legend style = {line width = 1, xshift = 4mm,
        solid,font = {\fontsize{10 pt}{10.4 pt}\selectfont}},colorbar style={title=}, ymin = {0}, width = {152.4mm}]\addplot+[draw=none, color = kth-blue,
      line width = 0,
      solid,mark = square*,
      mark size = 2.0,
      mark options = {
        color = black,
        fill = kth-blue,
        line width = 1,
        rotate = 0,
        solid
      }] coordinates {
        (0.01, 0.5996315789473684)
        (0.06, 0.45150375939849624)
        (0.11, 0.31301503759398497)
        (0.17, 0.27866165413533833)
        (0.22, 0.2561578947368421)
        (0.27, 0.15798496240601503)
        (0.32, 0.15382706766917292)
        (0.37, 0.23257142857142857)
        (0.43, 0.19690225563909775)
        (0.48, 0.17906766917293232)
        (0.53, 0.2321203007518797)
        (0.58, 0.23548872180451128)
        (0.64, 0.2051578947368421)
        (0.69, 0.21805263157894736)
        (0.74, 0.23242105263157894)
        (0.79, 0.20258646616541354)
        (0.84, 0.14027067669172932)
        (0.9, 0.15879699248120302)
        (0.95, 0.16769924812030076)
        (1.0, 0.17996992481203009)
      };
      \addlegendentry{$N^{0.01}_C$}
      \addplot+ [color = kth-blue,
      line width = 1,
      solid,mark = none,
      mark size = 2.0,
      mark options = {
        color = black,
        fill = kth-blue,
        line width = 1,
        rotate = 0,
        solid
      },forget plot] coordinates {
        (0.01, 0.5996315789473684)
        (0.06, 0.45150375939849624)
        (0.11, 0.31301503759398497)
        (0.17, 0.27866165413533833)
        (0.22, 0.2561578947368421)
        (0.27, 0.15798496240601503)
        (0.32, 0.15382706766917292)
        (0.37, 0.23257142857142857)
        (0.43, 0.19690225563909775)
        (0.48, 0.17906766917293232)
        (0.53, 0.2321203007518797)
        (0.58, 0.23548872180451128)
        (0.64, 0.2051578947368421)
        (0.69, 0.21805263157894736)
        (0.74, 0.23242105263157894)
        (0.79, 0.20258646616541354)
        (0.84, 0.14027067669172932)
        (0.9, 0.15879699248120302)
        (0.95, 0.16769924812030076)
        (1.0, 0.17996992481203009)
      };
      \addplot+[draw=none, color = kth-red,
      line width = 0,
      solid,mark = triangle*,
      mark size = 3.0,
      mark options = {
        color = black,
        fill = kth-red,
        line width = 1,
        rotate = 0,
        solid
      }] coordinates {
        (0.01, 0.2223869532987398)
        (0.06, 0.3061527057079318)
        (0.11, 0.37064492216456635)
        (0.17, 0.4447739065974796)
        (0.22, 0.5114899925871016)
        (0.27, 0.37064492216456635)
        (0.32, 0.38028169014084506)
        (0.37, 0.6597479614529281)
        (0.43, 0.5700518902891031)
        (0.48, 0.5396590066716086)
        (0.53, 0.7094143810229799)
        (0.58, 0.7242401779095626)
        (0.64, 0.631578947368421)
        (0.69, 0.6723498888065234)
        (0.74, 0.7175685693106004)
        (0.79, 0.625648628613788)
        (0.84, 0.4336545589325426)
        (0.9, 0.4907338769458858)
        (0.95, 0.5181616011860638)
        (1.0, 0.5559673832468495)
      };
      \addlegendentry{$N^{0.01}_I$}
      \addplot+ [color = kth-red,
      line width = 1,
      solid,mark = none,
      mark size = 2.0,
      mark options = {
        color = black,
        fill = kth-red,
        line width = 1,
        rotate = 0,
        solid
      },forget plot] coordinates {
        (0.01, 0.2223869532987398)
        (0.06, 0.3061527057079318)
        (0.11, 0.37064492216456635)
        (0.17, 0.4447739065974796)
        (0.22, 0.5114899925871016)
        (0.27, 0.37064492216456635)
        (0.32, 0.38028169014084506)
        (0.37, 0.6597479614529281)
        (0.43, 0.5700518902891031)
        (0.48, 0.5396590066716086)
        (0.53, 0.7094143810229799)
        (0.58, 0.7242401779095626)
        (0.64, 0.631578947368421)
        (0.69, 0.6723498888065234)
        (0.74, 0.7175685693106004)
        (0.79, 0.625648628613788)
        (0.84, 0.4336545589325426)
        (0.9, 0.4907338769458858)
        (0.95, 0.5181616011860638)
        (1.0, 0.5559673832468495)
      };
      \addplot+[draw=none, color = kth-green,
      line width = 0,
      solid,mark = *,
      mark size = 2.0,
      mark options = {
        color = black,
        fill = kth-green,
        line width = 1,
        rotate = 0,
        solid
      }] coordinates {
        (0.01, 0.4186948576431456)
        (0.06, 0.4158718111547028)
        (0.11, 0.420719791841106)
        (0.17, 0.48669908567678255)
        (0.22, 0.5672339384951046)
        (0.27, 0.3680280610455118)
        (0.32, 0.36832405561313863)
        (0.37, 0.701022148010766)
        (0.43, 0.5640509055352546)
        (0.48, 0.5167738698254994)
        (0.53, 0.7566649429686375)
        (0.58, 0.7750890291536937)
        (0.64, 0.6462556090384054)
        (0.69, 0.7104164761413686)
        (0.74, 0.7614452206792944)
        (0.79, 0.6521119789582013)
        (0.84, 0.3967625651232074)
        (0.9, 0.4563492148571285)
        (0.95, 0.4960006371675924)
        (1.0, 0.5389514773414767)
      };
      \addlegendentry{$N^{0.01}_T$}
      \addplot+ [color = kth-green,
      line width = 1,
      solid,mark = none,
      mark size = 2.0,
      mark options = {
        color = black,
        fill = kth-green,
        line width = 1,
        rotate = 0,
        solid
      },forget plot] coordinates {
        (0.01, 0.4186948576431456)
        (0.06, 0.4158718111547028)
        (0.11, 0.420719791841106)
        (0.17, 0.48669908567678255)
        (0.22, 0.5672339384951046)
        (0.27, 0.3680280610455118)
        (0.32, 0.36832405561313863)
        (0.37, 0.701022148010766)
        (0.43, 0.5640509055352546)
        (0.48, 0.5167738698254994)
        (0.53, 0.7566649429686375)
        (0.58, 0.7750890291536937)
        (0.64, 0.6462556090384054)
        (0.69, 0.7104164761413686)
        (0.74, 0.7614452206792944)
        (0.79, 0.6521119789582013)
        (0.84, 0.3967625651232074)
        (0.9, 0.4563492148571285)
        (0.95, 0.4960006371675924)
        (1.0, 0.5389514773414767)
      };
    \end{axis}
  \end{tikzpicture}
}
  \caption{Relative empirical complexities when solving \texttt{20term} with $\num{1000}$ scenarios using granulated aggregation of size $3$ followed by dynamic aggregation under the \textbf{SelectClosest} rule, as a function of the parameter $\tau$. The angular distance measure is used throughout.}
  \label{fig:20term_granulated_closest}
\end{figure}
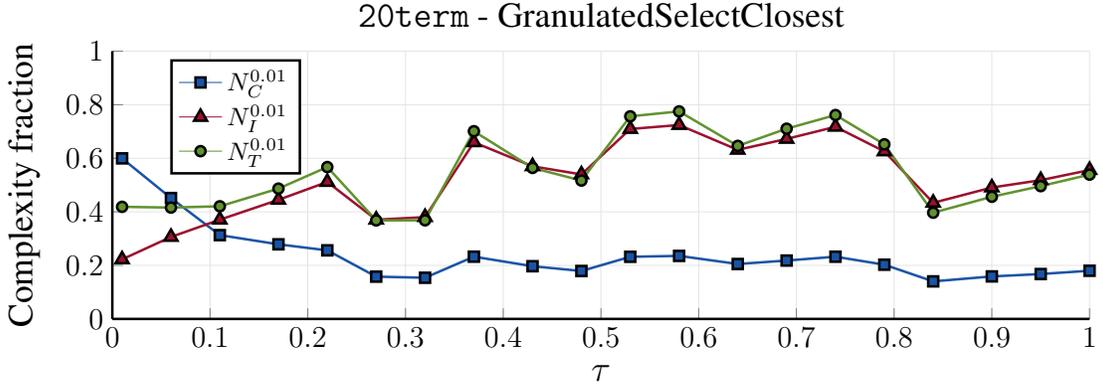

\begin{figure}
  \centering
  \resizebox{0.9\textwidth}{!}{
  \begin{tikzpicture}[]
    \begin{axis}[height = {53.269444444444446mm}, legend pos = {north west}, ylabel = {Complexity fraction}, title = {\texttt{20term} - GranulatedKmedoids}, xmin = {0}, xmax = {32}, unbounded coords=jump,scaled x ticks = false, xlabel = {$k$}, xlabel style = {font = {\fontsize{14 pt}{14.3 pt}\selectfont}, rotate = 0.0},xmajorgrids = true, xtick align = inside,xticklabel style = {font = {\fontsize{12 pt}{10.4 pt}\selectfont}, rotate = 0.0},x grid style = {color = kth-lightgray,
        line width = 0.25,
        solid},axis x line* = left,x axis line style = {line width = 1,
        solid},scaled y ticks = false,ylabel style = {font = {\fontsize{14 pt}{14.3 pt}\selectfont}, rotate = 0.0},ymajorgrids = true,ytick align = inside,yticklabel style = {font = {\fontsize{12 pt}{10.4 pt}\selectfont}, rotate = 0.0},y grid style = {color = kth-lightgray,
        line width = 0.25,
        solid},axis y line* = left,y axis line style = {line width = 1,
        solid},    xshift = 0.0mm,
      yshift = 0mm,
      ymax = 1.0,
      ,title style = {font = {\fontsize{14 pt}{18.2 pt}\selectfont}, rotate = 0.0},legend style = {line width = 1, xshift = 8mm,
        solid,font = {\fontsize{10 pt}{10.4 pt}\selectfont}},colorbar style={title=}, ymin = {0}, width = {152.4mm}]\addplot+[draw=none, color = kth-blue,
      line width = 0,
      solid,mark = square*,
      mark size = 2.0,
      mark options = {
        color = black,
        fill = kth-blue,
        line width = 1,
        rotate = 0,
        solid
      }] coordinates {
        (1.0, 0.2081203007518797)
        (3.0, 0.27446616541353386)
        (4.0, 0.24252631578947367)
        (6.0, 0.4056541353383459)
        (7.0, 0.4301954887218045)
        (9.0, 0.5695037593984963)
        (10.0, 0.5889924812030075)
        (12.0, 0.6219548872180451)
        (14.0, 0.772812030075188)
        (15.0, 0.614015037593985)
        (17.0, 0.6748872180451128)
        (18.0, 0.6643007518796993)
        (20.0, 0.6881203007518797)
        (22.0, 0.690766917293233)
        (23.0, 0.7172330827067669)
        (25.0, 0.6643007518796993)
        (26.0, 0.5002105263157894)
        (28.0, 0.7278195488721805)
        (29.0, 0.7463458646616541)
        (31.0, 0.5849022556390977)
      };
      \addlegendentry{$N^{0.01}_C$}
      \addplot+ [color = kth-blue,
      line width = 1,
      solid,mark = none,
      mark size = 2.0,
      mark options = {
        color = black,
        fill = kth-blue,
        line width = 1,
        rotate = 0,
        solid
      },forget plot] coordinates {
        (1.0, 0.2081203007518797)
        (3.0, 0.27446616541353386)
        (4.0, 0.24252631578947367)
        (6.0, 0.4056541353383459)
        (7.0, 0.4301954887218045)
        (9.0, 0.5695037593984963)
        (10.0, 0.5889924812030075)
        (12.0, 0.6219548872180451)
        (14.0, 0.772812030075188)
        (15.0, 0.614015037593985)
        (17.0, 0.6748872180451128)
        (18.0, 0.6643007518796993)
        (20.0, 0.6881203007518797)
        (22.0, 0.690766917293233)
        (23.0, 0.7172330827067669)
        (25.0, 0.6643007518796993)
        (26.0, 0.5002105263157894)
        (28.0, 0.7278195488721805)
        (29.0, 0.7463458646616541)
        (31.0, 0.5849022556390977)
      };
      \addplot+[draw=none, color = kth-red,
      line width = 0,
      solid,mark = triangle*,
      mark size = 3.0,
      mark options = {
        color = black,
        fill = kth-red,
        line width = 1,
        rotate = 0,
        solid
      }] coordinates {
        (1.0, 0.6426982950333581)
        (3.0, 0.28465530022238694)
        (4.0, 0.1882876204595997)
        (6.0, 0.20978502594514456)
        (7.0, 0.19125277983691624)
        (9.0, 0.19718309859154928)
        (10.0, 0.18383988139362492)
        (12.0, 0.17568569310600446)
        (14.0, 0.21793921423276502)
        (15.0, 0.17346182357301704)
        (17.0, 0.1905114899925871)
        (18.0, 0.18754633061527057)
        (20.0, 0.19421793921423278)
        (22.0, 0.1949592290585619)
        (23.0, 0.20237212750185324)
        (25.0, 0.18754633061527057)
        (26.0, 0.14158636026686433)
        (28.0, 0.20533728687916974)
        (29.0, 0.21052631578947367)
        (31.0, 0.1653076352853966)
      };
      \addlegendentry{$N^{0.01}_I$}
      \addplot+ [color = kth-red,
      line width = 1,
      solid,mark = none,
      mark size = 2.0,
      mark options = {
        color = black,
        fill = kth-red,
        line width = 1,
        rotate = 0,
        solid
      },forget plot] coordinates {
        (1.0, 0.6426982950333581)
        (3.0, 0.28465530022238694)
        (4.0, 0.1882876204595997)
        (6.0, 0.20978502594514456)
        (7.0, 0.19125277983691624)
        (9.0, 0.19718309859154928)
        (10.0, 0.18383988139362492)
        (12.0, 0.17568569310600446)
        (14.0, 0.21793921423276502)
        (15.0, 0.17346182357301704)
        (17.0, 0.1905114899925871)
        (18.0, 0.18754633061527057)
        (20.0, 0.19421793921423278)
        (22.0, 0.1949592290585619)
        (23.0, 0.20237212750185324)
        (25.0, 0.18754633061527057)
        (26.0, 0.14158636026686433)
        (28.0, 0.20533728687916974)
        (29.0, 0.21052631578947367)
        (31.0, 0.1653076352853966)
      };
      \addplot+[draw=none, color = kth-green,
      line width = 0,
      solid,mark = *,
      mark size = 2.0,
      mark options = {
        color = black,
        fill = kth-green,
        line width = 1,
        rotate = 0,
        solid
      }] coordinates {
        (1.0, 0.6551516235677696)
        (3.0, 0.37614197439325386)
        (4.0, 0.23788146698201357)
        (6.0, 0.3257821950061913)
        (7.0, 0.3071995506122625)
        (9.0, 0.3519028535848305)
        (10.0, 0.3124699499331115)
        (12.0, 0.32423223387060796)
        (14.0, 0.4496253565271617)
        (15.0, 0.35659636266675815)
        (17.0, 0.4074201345806001)
        (18.0, 0.37072615682694043)
        (20.0, 0.3749298275028284)
        (22.0, 0.3664823780627014)
        (23.0, 0.37091286641396515)
        (25.0, 0.3423501322070544)
        (26.0, 0.24704050465263794)
        (28.0, 0.39354330305471946)
        (29.0, 0.4055756328748692)
        (31.0, 0.29858229608493786)
      };
      \addlegendentry{$N^{0.01}_T$}
      \addplot+ [color = kth-green,
      line width = 1,
      solid,mark = none,
      mark size = 2.0,
      mark options = {
        color = black,
        fill = kth-green,
        line width = 1,
        rotate = 0,
        solid
      },forget plot] coordinates {
        (1.0, 0.6551516235677696)
        (3.0, 0.37614197439325386)
        (4.0, 0.23788146698201357)
        (6.0, 0.3257821950061913)
        (7.0, 0.3071995506122625)
        (9.0, 0.3519028535848305)
        (10.0, 0.3124699499331115)
        (12.0, 0.32423223387060796)
        (14.0, 0.4496253565271617)
        (15.0, 0.35659636266675815)
        (17.0, 0.4074201345806001)
        (18.0, 0.37072615682694043)
        (20.0, 0.3749298275028284)
        (22.0, 0.3664823780627014)
        (23.0, 0.37091286641396515)
        (25.0, 0.3423501322070544)
        (26.0, 0.24704050465263794)
        (28.0, 0.39354330305471946)
        (29.0, 0.4055756328748692)
        (31.0, 0.29858229608493786)
      };
    \end{axis}
  \end{tikzpicture}
}
  \caption{Relative empirical complexities when solving \texttt{20term} with $\num{1000}$ scenarios using granulated aggregation of size $3$ followed by cluster aggregation under the \textbf{Kmedoids} rule, as a function of the parameter $k$. The angular distance measure is used throughout.}
  \label{fig:20term_granulated_kmedoids}
\end{figure}
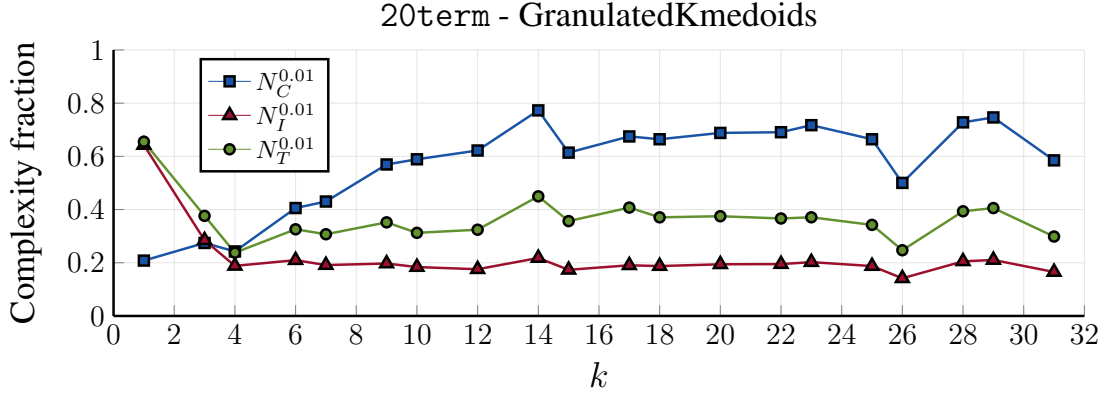

To facilitate a clear comparison between the aggregation scheme performances, the wall-clock time required to converge is shown for all methods in Fig.~\ref{fig:small_scale}. It is apparent that \textbf{SelectClosest} is not competitive with the other methods for this problem. \textbf{Kmedoids} starts being competitive with uniform aggregation schemes for larger values of $k$. \textbf{GranulatedKmedoids} outperforms the other methods fairly consistently and also achieves the shortest time to solution.

\begin{figure}
  \centering
  \input{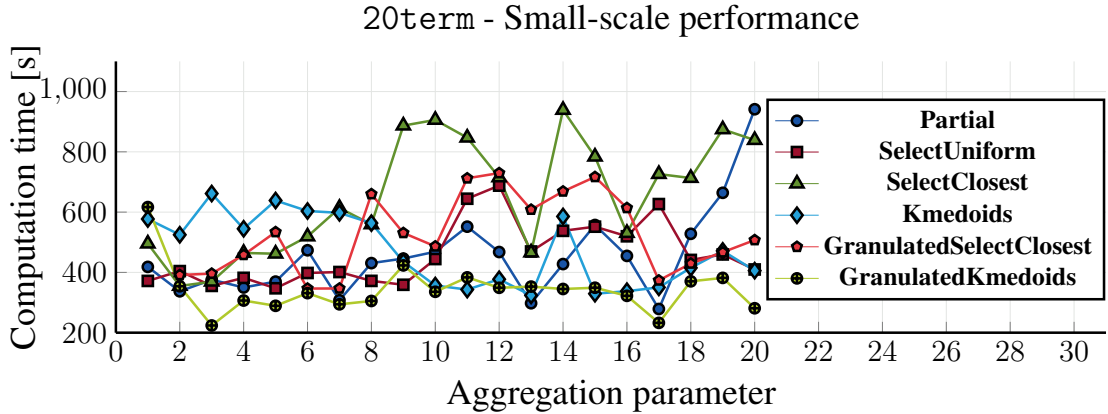}
  \caption{Wall-clock time required to converge within a relative tolerance of $\num{1e-2}$ when solving \texttt{20term} with $\num{1000}$ scenarios, using L-shaped with different aggregation schemes. Because the relevant aggregation parameters differ between the methods the dependent axis is an enumeration of the parameters.}
  \label{fig:small_scale}
\end{figure}

\subsection{Large-scale experiments}
\label{sec:large-scale-exper}

Now, we benchmark all problems in Table~\ref{tab:test_problems} at the intended sample sizes. To reduce random measurement noise we solve the same sampled problem five times and report median computation times. We prefer medians over averages because they are not as skewed by outliers. The variations in the measurements are relatively small compared to the computation times, so five repeats are deemed enough to reduce random errors. For comparison, we also measure the time required to solve the deterministic equivalents directly with Gurobi~\cite{gurobi}. The small-scale experiments indicate that most aggregation schemes are sensitive to the parameter configurations. It is not feasible to tune parameters when solving large-scale problems. Instead, our aim is to strive for perfomance improvements with generally applicable parameter settings. That is, we settle for a set of parameter configurations and apply them on all test problems.

The small-scale experiments show that the optimal aggregation level $T$ for uniform aggregation schemes is hard to guess, but that smaller values appear more performant than larger values. Therefore, we fix the aggregation level to $10\%$ of the number of scenarios on each worker for both \textbf{Partial} and \textbf{SelectUniform}. For example, $T = 16$ for the problems with sample size $\num{5000}$. For \textbf{SelectClosest}, smaller values of $\tau$ appear more performant. We try the value $\tau = 0.3$ on all problems. \textbf{Kmedoids} appeared less sensitive to the $k$ parameter, but in general a larger number of clusters gave better performance. We fix the value $k = 20$ in all experiments. For the granulated aggregation schemes we include two variations. First, we devise an aggressive scheme with a coarse initial partitioning that matches the size of the uniform schemes. The scheme \textbf{CoarseGranulatedSelectClosest} is thus a pre-granulation of size $\frac{0.1N}{32}$ followed by \textbf{SelectClosest} with $\tau = 0.3$. Likewise, the scheme \textbf{CoarseGranulatedKmedoids} is a pre-granulation of size $\frac{0.1N}{32}$ followed by \textbf{Kmedoids} with $k = 5$. We use a small value of $k$ because this gave the best performance in the small-scale test of \textbf{GranulatedKmedoids}. In addition, we devise a less aggressive scheme with a more fine initial partitioning of less than $1\%$ of the sample size. The aim is to reduce any scalability issues of the dynamic methods and otherwise use the same configurations. The parameter configurations used in the experiments are summarized in Table~\ref{tab:parameters}.

\begin{table}[htbp]
  \centering
  \begin{tabular}{|c|ccc|}
    \hline
    \backslashbox{Algorithm}{Problem} & \texttt{LandS}/\texttt{gbd} & \texttt{20term}/\texttt{ssn}/\texttt{storm} & \texttt{dayahead} \\
    \hline
    \textbf{Partial} & $T = 312$ & $T = 16$ & $T = 3$ \\
    \textbf{SelectUniform} & $T = 312$ & $T = 16$ & $T = 3$ \\
    \textbf{SelectClosest} & $\tau = 0.3$ & $\tau = 0.3$ & $\tau = 0.3$ \\
    \textbf{Kmedoids} & $k = 20$ & $k = 20$ & $k = 20$ \\
    \textbf{CoarseGranulatedSelectClosest} & $T = 312,\; \tau = 0.3$ & $T = 16,\; \tau = 0.3$ & $T = 5,\; \tau = 0.3$ \\
    \textbf{CoarseGranulatedKmedoids} & $T = 312,\; k = 5$ & $T = 16,\; k = 5$ & $T = 5,\; k = 5$ \\
    \textbf{FineGranulatedSelectClosest} & $T = 100,\; \tau = 0.3$ & $T = 5,\; \tau = 0.3$ & $T = 3,\; \tau = 0.3$ \\
    \textbf{FineGranulatedKmedoids} & $T = 100,\; k = 20$ & $T = 5,\; k = 20$ & $T = 3,\; k = 20$ \\
    \hline
  \end{tabular}
  \caption{Parameter configurations for each aggregation scheme used in the large-scale experiments. Problems that use the same configurations are grouped together.}
  \label{tab:parameters}
\end{table}

The results of the large-scale experiment are presented in Table~\ref{tab:largescale}. For easier comparison we also illustrate the results in Fig.~\ref{fig:large_scale}. As in initial observation, granulated aggregation schemes yields the best performance in five out of six problems. The \texttt{storm} problem is notorious for its flat objective that make L-shaped algorithms hard to tune~\cite{linderoth_decomposition_2003, Trukhanov2010}. Independent of the aggregation scheme, an optimal solution is found after $10-11$ iterations. Hence, the overhead from more intricate aggregation strategies yield worse performance than partial cut aggregation due to overhead. Any type of aggregation yields better performance than multi-cut on these large-scale problems. The time taken to solve the deterministic equivalents clarify the need for distributed approaches when solving large-scale stochastic programs. Most of the solution times are spent constructing the deterministic equivalent in memory. The \texttt{LandS} and \texttt{gbd} problems that have the largest number of scenarios do not even finish building the deterministic equivalent in reasonable time. The scalability issues of the dynamic schemes are more prominent than in the small-scale test. It could also hold that the parameter choices for \textbf{SelectClosest} and \textbf{Kmedoids} are suboptimal. However, these issues are significantly reduced by using granulated strategies.

\begin{table}[htbp]
  \centering
  \begin{tabular}{|c|cccccc|}
    \hline
    \backslashbox{Algorithm}{Wall-clock time [s]}{Problem} & \texttt{LandS} & \texttt{gbd} & \texttt{20term} & \texttt{ssn} & \texttt{storm} & \texttt{dayahead} \\
    \hline
    \textbf{Deterministic} & - & - & 5939.9 & 2944.3 & 5394.1 & 1053.2 \\
    \textbf{Multi-cut} & 2867.9 & 2910.1 & 4541.2 & 593.4 & 654.7 & 119.3 \\
    \textbf{Partial} & 107.5 & 51.4 & 1076.9 & 512.2 & \textbf{552.0} & 97.2 \\
    \textbf{SelectUniform} & 145.6 & 77.1 & 1619.2 & 601.7 & 559.9 & 95.7 \\
    \textbf{SelectClosest} & 171.4 & 217.1 & 6284.7 & 3028.8 & 775.8 & 114.5 \\
    \textbf{Kmedoids} & 248.7 & 191.5 & 11785.1 & 4273.5 & 2082.3 & 114.6 \\
    \textbf{CoarseGranulatedSelectClosest} & \textbf{103.3} & \textbf{42.6} & 1615.7 & 844.7 & 774.5 & 109.7 \\
    \textbf{CoarseGranulatedKmedoids} & 126.3 & 44.8 & \textbf{779.2} & 627.6 & 780.2 & 119.5 \\
    \textbf{FineGranulatedSelectClosest} & 109.3 & 42.7 & 1558.7 & 443.4 & 771.0 & 107.3 \\
    \textbf{FineGranulatedKmedoids} & 120.3 & 45.9 & 1154.4 & \textbf{331.6} & 780.6 & \textbf{73.3} \\
    \hline
  \end{tabular}
  \caption{Median computation time, in seconds, required to solve the problems described in Table~\ref{tab:test_problems} using L-shaped with different aggregation schemes. The times required to solve the deterministic equivalents directly are also reported. The fastest result is marked in bold for each problem. The time taken to solve the deterministic equivalents of \texttt{LandS} and \texttt{gbd} are excluded because they did not finish after relatively long computation times.}
  \label{tab:largescale}
\end{table}

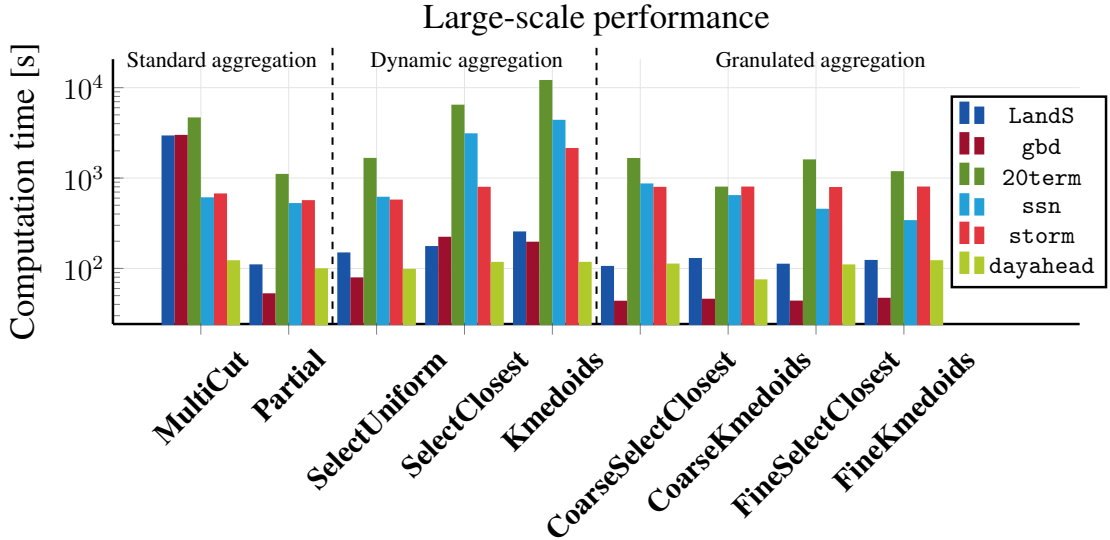
\begin{figure}
  \centering
  \resizebox{0.9\textwidth}{!}{
  \begin{tikzpicture}[]
    \begin{semilogyaxis}[height = {53.269444444444446mm}, legend pos = {north east}, ylabel = {Computation time [s]}, title = {Large-scale performance}, unbounded coords=jump,scaled x ticks = false, xlabel = {}, xlabel style = {font = {\fontsize{14 pt}{14.3 pt}\selectfont}, rotate = 0.0},xmajorgrids = true, xtick align = inside,xticklabel style = {font = {\fontsize{12 pt}{10.4 pt}\selectfont}, rotate = 45.0},x grid style = {color = kth-lightgray,
        line width = 0.25,
        solid},axis x line* = left,x axis line style = {line width = 1,
        solid},scaled y ticks = false,ylabel style = {font = {\fontsize{14 pt}{14.3 pt}\selectfont}, rotate = 0.0},ymajorgrids = true,ytick align = inside,yticklabel style = {font = {\fontsize{12 pt}{10.4 pt}\selectfont}, rotate = 0.0},y grid style = {color = kth-lightgray,
        line width = 0.25,
        solid},axis y line* = left,y axis line style = {line width = 1,
        solid},    xshift = 0.0mm,
      yshift = 0mm,
      ybar = 1pt,
      bar width = 0.15cm,
      xtick = data,
      xmax = extra2,
      symbolic x coords={\textbf{MultiCut},\textbf{Partial},\textbf{SelectUniform},\textbf{SelectClosest},\textbf{Kmedoids},\textbf{CoarseSelectClosest},\textbf{CoarseKmedoids},\textbf{FineSelectClosest},\textbf{FineKmedoids},extra,extra2},
      title style = {font = {\fontsize{14 pt}{18.2 pt}\selectfont}, rotate = 0.0},legend style = {line width = 1, xshift = 8mm, yshift = -4mm,
        solid,font = {\fontsize{10 pt}{10.4 pt}\selectfont}},colorbar style={title=}, ymin = {0}, width = {152.4mm}]
      \addplot+ [color = kth-blue,
      line width = 1,
      solid,mark = none,
      mark size = 2.0,
      mark options = {
        color = black,
        fill = kth-blue,
        line width = 1,
        rotate = 0,
        solid
      }] coordinates {
        (\textbf{MultiCut}, 2867.881116888)
        (\textbf{Partial}, 107.46696166)
        (\textbf{SelectUniform}, 145.626648228)
        (\textbf{SelectClosest}, 171.39198896)
        (\textbf{Kmedoids}, 248.67242511)
        (\textbf{CoarseSelectClosest}, 103.271004108)
        (\textbf{CoarseKmedoids}, 126.342183312)
        (\textbf{FineSelectClosest}, 109.313701678)
        (\textbf{FineKmedoids}, 120.305342864)
      };
      \addlegendentry{\texttt{LandS}}
      \addplot+ [color = kth-red,
      line width = 1,
      solid,mark = none,
      mark size = 2.0,
      mark options = {
        color = black,
        fill = kth-red,
        line width = 1,
        rotate = 0,
        solid
      }] coordinates {
        (\textbf{MultiCut}, 2910.149777178)
        (\textbf{Partial}, 51.412536623)
        (\textbf{SelectUniform}, 77.146026107)
        (\textbf{SelectClosest}, 217.12229874)
        (\textbf{Kmedoids}, 191.462208758)
        (\textbf{CoarseSelectClosest}, 42.558758058)
        (\textbf{CoarseKmedoids}, 44.770351933)
        (\textbf{FineSelectClosest}, 42.684789195)
        (\textbf{FineKmedoids}, 45.881097568)
      };
      \addlegendentry{\texttt{gbd}}
      \addplot+ [color = kth-green,
      line width = 1,
      solid,mark = none,
      mark size = 2.0,
      mark options = {
        color = black,
        fill = kth-green,
        line width = 1,
        rotate = 0,
        solid
      }] coordinates {
        (\textbf{MultiCut}, 4541.182708927)
        (\textbf{Partial}, 1076.90947167)
        (\textbf{SelectUniform}, 1619.205119759)
        (\textbf{SelectClosest}, 6284.675698785)
        (\textbf{Kmedoids}, 11785.137523813)
        (\textbf{CoarseSelectClosest}, 1615.73448623)
        (\textbf{CoarseKmedoids}, 779.245290129)
        (\textbf{FineSelectClosest}, 1558.698238719)
        (\textbf{FineKmedoids}, 1154.382193278)
      };
      \addlegendentry{\texttt{20term}}
      \addplot+ [color = kth-lightblue,
      line width = 1,
      solid,mark = none,
      mark size = 2.0,
      mark options = {
        color = black,
        fill = kth-lightblue,
        line width = 1,
        rotate = 0,
        solid
      }] coordinates {
        (\textbf{MultiCut}, 593.440517846)
        (\textbf{Partial}, 512.214536437)
        (\textbf{SelectUniform}, 601.730427227)
        (\textbf{SelectClosest}, 3028.826667795)
        (\textbf{Kmedoids}, 4273.528127389)
        (\textbf{CoarseSelectClosest}, 844.658442669)
        (\textbf{CoarseKmedoids}, 627.606488302)
        (\textbf{FineSelectClosest}, 443.388912742)
        (\textbf{FineKmedoids}, 331.626749674)
      };
      \addlegendentry{\texttt{ssn}}
      \addplot+ [color = kth-lightred,
      line width = 1,
      solid,mark = none,
      mark size = 2.0,
      mark options = {
        color = black,
        fill = kth-lightred,
        line width = 1,
        rotate = 0,
        solid
      }] coordinates {
        (\textbf{MultiCut}, 654.694296607)
        (\textbf{Partial}, 551.99496934)
        (\textbf{SelectUniform}, 559.949202313)
        (\textbf{SelectClosest}, 775.796054255)
        (\textbf{Kmedoids}, 2082.262125634)
        (\textbf{CoarseSelectClosest}, 774.492389134)
        (\textbf{CoarseKmedoids}, 780.216737453)
        (\textbf{FineSelectClosest}, 770.963001891)
        (\textbf{FineKmedoids}, 780.622261702)
      };
      \addlegendentry{\texttt{storm}}
      \addplot+ [color = kth-lightgreen,
      line width = 1,
      solid,mark = none,
      mark size = 2.0,
      mark options = {
        color = black,
        fill = kth-lightgreen,
        line width = 1,
        rotate = 0,
        solid
      }] coordinates {
        (\textbf{MultiCut}, 119.318433163)
        (\textbf{Partial}, 97.160135059)
        (\textbf{SelectUniform}, 95.734061138)
        (\textbf{SelectClosest}, 114.50394825)
        (\textbf{Kmedoids}, 114.580121021)
        (\textbf{CoarseSelectClosest}, 109.679055157)
        (\textbf{CoarseKmedoids}, 73.320672212)
        (\textbf{FineSelectClosest}, 107.299728785)
        (\textbf{FineKmedoids}, 119.484407286)
      };
      \addlegendentry{\texttt{dayahead}}
    \end{semilogyaxis}
    \node at (1.54,3.7) {\small Standard aggregation};
    \draw[dashed,thick] (3.1,0) -- (3.1,4);
    \node at (5,3.7) {\small Dynamic aggregation};
    \draw[dashed,thick] (6.83,0) -- (6.83,4);
    \node at (10,3.7) {\small Granulated aggregation};
  \end{tikzpicture}
}
  \caption{Median computation time required to solve the problems described in Table~\ref{tab:test_problems} using L-shaped with different aggregation schemes.}
  \label{fig:large_scale}
\end{figure}

\section{Discussion and conclusion}
\label{sec:disc-concl}

\subsection{Discussion}
\label{sec:discussion}

There is no single strategy that outperforms the others for every problem. The optimal parameter configurations are also not the same. This implies that the best aggregation scheme is problem-dependent. However, we can propose some rules-of-thumb based on these experiments. First, among the proposed distance measures in~\ref{sec:distance-measures}, the angular distance appears to be most suited for distance-based aggregation scheme. In general, the granulated strategies show great promise mostly in combination with the k-medoids clustering scheme. The \textbf{Kmedoids} rule itself showed promise in the small-scale experiment, but suffered from scalability issues for the large-scale problems. However, these issues are successfully alleviated by the granulated strategy.

Like prior approaches, our dynamic aggregation schemes are governed by tunable parameters. Our small- and large-scale experiments indicate that the sensitivity to the parameter choice is reduce by pre-granulation. Instead of guessing the optimal aggregation level, one can choose either a coarse or fine pre-granulation and then and apply a dynamic aggregation scheme with aggressive or conservative aggregation settings. The experimental results on the diverse testset indicate that this strategy will on average yield performance improvements.

In all experiments, we observe solid performance results from partial cut aggregation. It consistently outperforms the bare-bone dynamic schemes. We can relate this observation to our worst-case results. Although the dynamic aggregation schemes we propose could theoretically aggregate cuts in a more clever way, they could also theoretically identify more facets than static schemes before converging. This is supported by our worst-case bound on the dynamic aggregation~\eqref{eq:dynaggcomplexity} which in general is expected to be larger than the static worst-case bound~\eqref{eq:aggcomplexity} because of the combinatorial terms. It is however outperformed by the granulated strategies on most problems. There could exist better aggregation levels that would improve the performance of partial cut aggregation. However, parameter tuning is not feasible when solving large-scale problems. The granulated strategies appears more reliable in this regard because they outperform partial cut aggregation on five out of six different problems using general rules. A greater set of problems should be considered to further test this hypothesis, but these initial results are promising.

Our derived worst-case bounds grow astronomically large quickly, but they do not give accurate estimates of average-time complexity. Instead, they allow us to reason about aggregation schemes and suggest rules of thumb. From practical experience, we would not expect the dynamical worst-case bound~\eqref{eq:dynaggcomplexity} to be attained by anything but diabolically constructed problems. An identified facet in some aggregate generally corresponds to many other facets in coarser aggregates. The worst case would therefore occur only if all facets are identified in a very specific order, which is unlikely in the average case. Hence, the combinatorial explosion suggested by the worst-case bound is rarely observed in practice. Future work could involve further theoretical development around the average-time complexity of these algorithms.

\subsection{Conclusion}
\label{sec:conclusion}

In this work, we have presented a novel framework for dynamic cut aggregation in L-shaped algorithms. With our approach, the optimality cuts generated at each iteration can be aggregated into arbitrary partitions which are allowed to vary at each iteration. We have given a worst-case bound for aggregated L-shaped in Theorem~\ref{thm:aggcomplexity} that holds for any static partition scheme $\mathcal{S}$. We have also extended this worst-case result to dynamic aggregation in Theorem~\ref{thm:dynaggcomplexity} and given a convergence proof for L-shaped with dynamic cut aggregation in Theorem~\ref{thm:dynlshapedconvergence}. We have proposed three practical aggregation types, dynamic aggregation, cluster aggregation, and granulated aggregation, and also introduced various decision rules that yield a large set of dynamic aggregation schemes.

The proposed aggregation schemes have been evaluated by solving a diverse set of large-scale stochastic programs, which are distributed over $\num{32}$ worker nodes. Although the best aggregation scheme and parameter configuration are unknown for a given problem, we have shown that large performance gains are attainable with granulated aggregation using generally applicable rules. Our set of proposed aggregation schemes do not encompass every possible partitioning scheme and we aim to explore more strategies in the future. Based on our experimental observations, and our worst-case bound, we suggest designing aggregation schemes that limit the possible combinations of aggregates. In brief, our experimental results are promising and indicate that granulated aggregation in combination with k-medoids clustering can yield significant performance improvements for distributed L-shaped algorithms.

\appendix

\section{Proofs}
\label{sec:proofs}

In this section, we provide the proofs of Theorems~\ref{thm:aggcomplexity}-~\ref{thm:dynaggcomplexity}.

\begin{proof}[Proof of Theorem~\ref{thm:aggcomplexity}]
  In the worst case, a single facet of one of the $A(\mathcal{S})$ aggregates is identified at each iteration, so that all facets are identified before converging. Hence, because $A(\mathcal{S})$ facets are identified in the first iteration, the maximum number of iterations is $1+M-A(\mathcal{S})$, where $M$ is the total number of facets that can be identified in all aggregates. Consider any of the aggregates $\mathcal{S}_a$. In the worst case, $b_s = b$ for every $Q_s(x)$ in that aggregate. If so, it holds that one facet of this aggregate, in every direction $j$, consists of facets from each of its $\abs{\mathcal{S}_a}$ constituents, for a total of $\abs{\mathcal{S}_a}b$ combinations. However, the facet identified in the considered aggregate at the first iteration consists of $\abs{\mathcal{S}_a}$ facets because $\theta_a$ is initially unrestricted in the master problem. In the worst case, any new facet identified in aggregate $\mathcal{S}_a$ includes only one facet that has not been identified before. There are $b-1$ such slopes remaining for each of the constituents, for a total number of $1+\abs{\mathcal{S}_a}(b-1)$ facets in $\mathcal{S}_a$. Moreover, this can occur in all $m$ dual directions of the subproblems. Hence, the maximum number of iterations required to identify all facets in the given aggregate $\mathcal{S}_a$ is given by $\brackets*{1+\abs{\mathcal{S}_a}(b-1)}^m$, and hence, $M = \sum_{a = 1}^{A(\mathcal{S})} \brackets*{1+\abs{\mathcal{S}_a}(b-1)}^m$. In conclusion, the maximum number of iterations of the aggregated L-shaped is in the worst case given by
  \begin{equation*}
    1 + \sum_{a = 1}^{A(\mathcal{S})} \brackets*{1+\abs{\mathcal{S}_a}(b-1)}^m - A(\mathcal{S}).
  \end{equation*}
\end{proof}

\begin{proof}[Proof of Theorem~\ref{thm:dynlshapedconvergence}]
  We assume without loss of generality that every iterate $x_k$ generated during the L-shaped algorithm is second-stage feasible. Otherwise, we can fallback to the standard proof using a finite number of feasibility cuts. Now, for any partitioning scheme that satisfies~\eqref{eq:partition-conditions}, the $A_k$ optimality cut aggregates generated during one iteration will form supports of the second-stage objective because
  \begin{equation*}
    \theta = \sum_{a = 1}^{A}\sum_{s \in \mathcal{S}^k_a} \theta_{s,k} \geq \sum_{a = 1}^{A}\sum_{s \in \mathcal{S}^k_a} \pi_s \lambda_{s,k}^T\parentheses*{h_s-T_sx} = \sum_{s = 1}^{N} \pi_s\lambda_{s,k}^T(h_s-T_sx)
  \end{equation*}
  for some $\parentheses*{\lambda_{1,k},\dots,\lambda_{N,k}} \in \bar{\Lambda}_1\times\dots\times\bar{\Lambda}_N$, which is exactly one of the facets of $Q(x)$. Every iteration a new iterate $x_k$ and $\braces{\theta_{s,k}}_{s = 1}^{N}$ is obtained from solving the master problem. Now, it can hold that
  \begin{equation*}
    \sum_{s \in \mathcal{S}^k_a} \theta_{s,k} < \sum_{s \in \mathcal{S}^k_a} \parentheses*{q_{s,k} - \partial Q_{s,k} x_k}
  \end{equation*}
  for some, or all, of the current iteration aggregates $\mathcal{S}^k_a \in \mathcal{S}^k$. If so, the current set of aggregated cuts in the master do not impose
  \begin{equation*}
    \sum_{s = 1}^{N} \theta_s \geq Q(x).
  \end{equation*}
  Therefore, a new set of second-stage dual multipliers, not already present in the master problem, will be added through aggregated optimality cuts. Because each set of extreme points $\bar{\Lambda}_s$ is finite, this can only occur finitely many times. Therefore, it must eventually hold that
  \begin{equation*}
    \sum_{s \in \mathcal{S}^k_a} \theta_{s,k} \geq \sum_{s \in \mathcal{S}^k_a} \parentheses*{q_{s,k} - \partial Q_{s,k} x_k}
  \end{equation*}
  for all $a = 1,\dots,A_k$ so that
  \begin{equation*}
    \theta_k = \sum_{a = 1}^{A}\sum_{s \in \mathcal{S}^k_a} \theta_{s,k} \geq \sum_{a = 1}^{A}\sum_{s \in \mathcal{S}^k_a} \pi_s \lambda_{s,k}^T\parentheses*{h_s-T_s x_k} = \sum_{s = 1}^N \pi_s \lambda_{s,k}^T\parentheses*{h_s-T_s x_k}.
  \end{equation*}
  Now, since $\theta_k$ is optimal and the $\theta_{s,k}$ are free in~\eqref{eq:dynamicls} except for the cut constraints, it follows that
  \begin{equation*}
    \theta_k = Q(x_k) = \sum_{s = 1}^{N} \pi_s\max_{\lambda_{s,k} \in \bar{\Lambda}_s}{\lambda_{s,k}^T(h_s-T_s x_k)} \leq \sum_{s = 1}^{N} \pi_s\max_{\lambda_{s,k} \in \bar{\Lambda}_s}{\lambda_{s,k}^T(h_s-T_sx)} = Q(x).
  \end{equation*}
  In conclusion, $x_k$ is an optimal solution to~\eqref{eq:sp}.
\end{proof}

\begin{proof}[Proof of Theorem~\ref{thm:dynaggcomplexity}]
  In the worst case, a single facet of one of the $A_k$ aggregates is identified at each iteration $k$, so that all possible combinations of facets are identified before converging. Hence, because $A_0$ facets are identified in the first iteration, the maximum number of iterations is $1+M-A_0$, where $M$ is the total number of facets that can be identified in all possible aggregates. Consider any aggregate $\mathcal{S}^k_a$ at some iteration $k$. We have already shown that the number of facets that can be identified in this aggregate is given by $\brackets*{1+\abs{\mathcal{S}^k_a}(b-1)}^m$. There are no assumed restrictions on the partitioning schemes in $\mathcal{D}$. Therefore, any aggregate of the same size as $\mathcal{S}^k_a$ could be considered in subsequent iterations, each of which share the same number of possible facets that can be identified. If the common size is denoted by $a_L$, this number is given by $\brackets*{1+a_L(b-1)}^m$. The number of aggregates that share the size $a_L$ is given by the number of combinations of $a_L$ out of $N$. Moreover, the size of a given aggregate can vary between $1$ and $N$. Therefore, the total number of facets identifyable in all possible aggregates is given by $\sum_{a_L = 1}^{N} \binom{N}{a_L} \brackets*{1+a_L(b-1)}^m$. When the algorithm has converged it will hold that all possible facets corresponding to some partitioning scheme $\mathcal{S} \in \mathcal{D}$ have been identified. Furthermore, in the worst case, there is only one facet in all other possible partitioning schemes that have not been identified before the final iteration. These facets will not be identified since the algorithm terminates; so, their total must be subtracted from the number of facets we can consider. This number is equal to the total number of possible partitioning schemes minus one due to the scheme active during the final iteration. The number of possible partitioning schemes is given exactly by the Bell number. Therefore, the maximum number of iterations required to converge is in the worst case given by:
  \begin{equation*}
    1 + \sum_{a_L = 1}^{N} \binom{N}{a_L} \brackets*{1+a_L(b-1)}^m - (B_N-1) - A_0 = 2 + \sum_{a_L = 1}^{N} \binom{N}{a_L} \brackets*{1+a_L(b-1)}^m - \sum_{a_L = 1}^{N} \stirling{N}{a_L} - A_0.
  \end{equation*}
\end{proof}

\section{Distance measures}
\label{sec:distance-measures}

Many of the devised heuristics for selecting which cuts to aggregate require a measure of distance between two given optimality cuts. Let $c_s$ denote a generated optimality cut on the form
\begin{equation} \label{eq:optimalitycut}
  \partial Q_s x + \theta_s \geq q_s
\end{equation}
and let $d(c_i,c_j)$ denote some distance measure between two optimality cuts of the form~\eqref{eq:optimalitycut}. We do not devise measures that fulfill all conditions of a metric, but we at least require that $d(c_i,c_j) \geq 0$ and that $d(c_i,c_j) = 0$ whenever $c_i = c_j$. Ideally, we want a measure so that $c_i$ and $c_j$ give similar information about the feasible region in the master problem when $d(c_i,c_j)$ is small. To this end, we borrow ideas from the following survey paper about aggregation techniques in optimization~\cite{Rogers1991} when exploring measures. We stipulate and utilize the following three measures.

\subsection{Absolute distance}
\label{sec:absolute-distance}

First, we introduce the absolute distance between two optimality cuts as:
\begin{equation} \label{eq:absdist}
  d(c_i,c_j) = \frac{\norm{\tilde{c}_i - \tilde{c}_j}}{\max{\parentheses*{\norm{\vphantom{\tilde{c}_j}\tilde{c}_i}, \norm{\tilde{c}_j}}}}
\end{equation}
where
\begin{equation*}
  \tilde{c}_s = \begin{bmatrix}
 \partial Q_s \\
 q_s
\end{bmatrix}
\end{equation*}
The absolute distance has the property that $d(c_i,c_j) = 0$ precisely when $c_i = c_j$. However, it will often place a heavy weight on $q_s$, since $q_s$ directly relates to the second-stage objective and it often holds that $\abs{q_s} \gg \norm{\partial Q_s}$. In many of the introduced selection rules, a cut candidate $c_s$ is often compared to an existing aggregate $c_{\mathcal{S}_a}$ of cuts:
\begin{equation} \label{eq:aggcut}
  \sum_{s \in \mathcal{S}_a}\partial Q_s x + \sum_{s \in \mathcal{S}_a} \theta_s \geq \sum_{s \in \mathcal{S}_a} q_s.
\end{equation}
Due to the summation, the distance between an aggregated cut and a single cut will generally be larger than that between two single cuts. Therefore, we normalize by the number of cuts when calculating the distance, so that
\begin{equation*}
  \tilde{c}_{\mathcal{S}_a} = \frac{1}{\abs{\mathcal{S}_a}}\begin{bmatrix}
 \sum_{s \in \mathcal{S}_a}\partial Q_s \\
 \sum_{s \in \mathcal{S}_a} q_s
\end{bmatrix}.
\end{equation*}

\subsection{Angular distance}
\label{sec:angular-distance}

Next, we introduce the angular distance between two cuts as
\begin{equation} \label{eq:angulardist}
  1 - \frac{\abs{\partial Q_i \cdot \partial Q_j}}{\norm{\vphantom{\partial Q_j}\partial Q_i}\norm{\partial Q_j}}.
\end{equation}
This distance is invariant over aggregation; so, there is no need to rescale. The maximum distance is acquired for perpendicular cuts, which are probably undesired to aggregate. The main drawback is that the distance between parallel cuts is zero.

\subsection{Spatioangular distance}
\label{sec:spat-dist}

Finally, we introduce the spatioangular distance between two cuts as
\begin{equation*}
  1 - \frac{\abs{\partial Q_i \cdot \partial Q_j}}{\norm{\vphantom{\partial Q_j}\partial Q_i}\norm{\partial Q_j}} + \frac{\abs{q_i-q_j}}{\max{\parentheses*{\abs{\vphantom{q_j}q_i},\abs{q_j}}}}.
\end{equation*}
This formulation alleviates the drawback of the angular distance by also measuring the distance between the bias terms $q_i$ and $q_j$. However, it is not as straightforward to decide at what relative tolerance the two cuts should be considered close enough for aggregation. As with the absolute distance, we again keep track of the amount of cuts included in an aggregate and rescale $q_s$ accordingly.

\bibliographystyle{unsrt}
\bibliography{references}

\end{document}